\documentclass[11pt,reqno]{amsart}
\usepackage[T1]{fontenc}
\usepackage{lmodern}
\usepackage[a4paper,margin=26mm,headheight=14pt]{geometry}
\usepackage{amsmath,amssymb,mathtools,mathrsfs,booktabs,array}
\usepackage{microtype}
\usepackage{aliascnt}
\usepackage[hidelinks,unicode]{hyperref}
\usepackage[nameinlink,noabbrev]{cleveref}
\hypersetup{pdftitle={Graph-Diameter Criteria and Sharp Dissipation in Discrete Wasserstein--Fisher Geometry},pdfsubject={Boundary entropy Hessians, weighted graphs, and nonlinear probability flows},pdfauthor={Rubin Zhang and Ning Zhang}}
\newtheorem{theorem}{Theorem}[section]
\newaliascnt{lemma}{theorem}
\newtheorem{lemma}[lemma]{Lemma}
\aliascntresetthe{lemma}
\newaliascnt{proposition}{theorem}
\newtheorem{proposition}[proposition]{Proposition}
\aliascntresetthe{proposition}
\newaliascnt{corollary}{theorem}
\newtheorem{corollary}[corollary]{Corollary}
\aliascntresetthe{corollary}
\theoremstyle{definition}
\newaliascnt{definition}{theorem}

\aliascntresetthe{definition}
\theoremstyle{remark}
\newaliascnt{remark}{theorem}
\newtheorem{remark}[remark]{Remark}
\aliascntresetthe{remark}
\crefname{theorem}{Theorem}{Theorems}
\crefname{lemma}{Lemma}{Lemmas}
\crefname{proposition}{Proposition}{Propositions}
\crefname{corollary}{Corollary}{Corollaries}
\crefname{remark}{Remark}{Remarks}
\crefname{section}{Section}{Sections}
\crefname{table}{Table}{Tables}
\numberwithin{equation}{section}
\newcommand{\R}{\mathbb R}
\newcommand{\one}{\mathbf 1}
\newcommand{\Pplus}{\mathcal P_+}
\newcommand{\Pbar}{\overline{\mathcal P}}
\newcommand{\A}{\mathcal A}
\newcommand{\B}{\mathcal B}
\newcommand{\PP}{\mathcal I}
\newcommand{\Q}{\mathcal Q}
\newcommand{\dd}{\,\mathrm d}
\DeclareMathOperator{\diag}{diag}
\DeclareMathOperator{\Var}{Var}
\DeclareMathOperator{\osc}{osc}
\DeclareMathOperator{\dist}{dist}
\DeclareMathOperator{\diam}{diam}
\DeclareMathOperator{\supp}{supp}
\DeclareMathOperator{\Hess}{Hess}

\newcommand{\AW}{\A_W}
\newcommand{\AC}{\A_C}
\newcommand{\IW}{\mathscr I_W}
\newcommand{\IF}{\mathscr I_F}
\newcommand{\doi}[1]{\href{https://doi.org/#1}{\nolinkurl{#1}}}
\newcommand{\arxiv}[1]{\href{https://arxiv.org/abs/#1}{arXiv:\nolinkurl{#1}}}
\allowdisplaybreaks[1]
\title[Graph diameter in discrete Wasserstein--Fisher geometry]{Graph-Diameter Criteria and Sharp Dissipation\\in Discrete Wasserstein--Fisher Geometry}
\author[Rubin Zhang]{Rubin Zhang}
\address{School of Mathematics and Statistics, Huazhong University of Science and Technology, Wuhan, Hubei 430074, China; and Department of Mathematical and Statistical Sciences, University of Alberta, Edmonton, Alberta T6G 2G1, Canada}
\email{rbzhang3@163.com}
\thanks{Rubin Zhang was supported by the China Scholarship Council (CSC)}
\author[Ning Zhang]{Ning Zhang}
\address{School of Mathematics and Statistics, Huazhong University of Science and Technology, Wuhan, Hubei 430074, China}
\email{nzhang2@hust.edu.cn}
\thanks{Corresponding author: Ning Zhang. Ning Zhang was supported by the National Natural Science Foundation of China (Grant No.~62535007) and Huazhong University of Science and Technology (Grant No.~5003011074)}
\date{October 24, 2026}
\subjclass[2020]{Primary 53B12; Secondary 49Q22, 60J27, 05C12}
\keywords{Discrete optimal transport, entropy Hessian, Fisher--Rao reaction, graph diameter, boundary degeneration, entropy dissipation, Hilbert projective metric}
\begin{document}
\begin{abstract}
We study the probability simplex of a finite connected weighted graph with an Onsager operator combining logarithmic-mean transport and mass-conserving Fisher reaction. For every fixed positive reaction parameter, we prove that the entropy Hessian has a finite global lower bound if and only if the original transport graph has diameter at most two. Near a boundary probability, the corresponding criterion is that its positive support reaches every vertex in at most two steps. The proof controls all potential directions under arbitrary multiscale boundary degeneration. For graphs of diameter at most two, the optimal lower bound is Lipschitz on bounded parameter intervals, including the pure-transport endpoint. We also determine the largest instantaneous logarithmic-amplitude dissipation function depending only on the graph and the current amplitude, and construct two-level probabilities attaining it. Its positivity has the same two-step criterion. As consequences, we obtain graph-dependent entropy-decay estimates and exact projective contraction bounds without assuming nonnegative entropic curvature.
\end{abstract}
\maketitle

\section{Introduction and main results}\label{sec:intro}

The interpretation of finite-state Markov evolution as an entropy gradient flow gives a geometric approach to discrete probability \cite{Maas2011,ErbarMaas2012,Mielke2013}. In the logarithmic-mean geometry, entropy convexity is measured by a Riemannian Hessian on the interior of the probability simplex. Adding a mass-conserving Fisher reaction preserves the gradient-flow structure, but changes the behavior of this Hessian near the boundary. We use ``Wasserstein--Fisher geometry'' for the specific graph-based Onsager sum in \eqref{eq:operator}; its reaction is of Fisher--Rao type, while its transport uses the logarithmic mean on the original graph. Although the added covariance couples every pair of positive coordinates, the original transport graph remains visible in the boundary singularities.

This paper identifies the precise graph obstruction. For each fixed positive reaction strength, finite global entropic curvature is equivalent to diameter at most two. The corresponding local statement is a two-step reachability condition on a boundary support. A second, dynamical calculation identifies exactly the same graph condition in the optimal instantaneous decay of the logarithmic amplitude. These statements distinguish three properties that should not be conflated: a finite Hessian lower bound, a nonnegative Hessian lower bound, and convergence of the entropy flow.

\subsection{The model}
Throughout the paper, $G=(V,E,w)$ is a finite connected undirected graph with $N=|V|\ge2$. Its edge weights satisfy $w_{ij}=w_{ji}>0$ for $\{i,j\}\in E$, and $w_{ij}=0$ otherwise; also $w_{ii}=0$. Every sum over $E$ counts an unoriented edge once. Graph distances refer to this original transport graph and disregard the magnitudes of its positive weights. Write
\[
 \Pplus=\{\rho\in(0,\infty)^N:\one^T\rho=1\},\qquad
 \Pbar=\{\rho\in[0,\infty)^N:\one^T\rho=1\}.
\]
The reference probability is uniform, $\pi_i=1/N$. The logarithmic mean is
\begin{equation}\label{eq:logmean}
 \theta(a,b)=\int_0^1 a^t b^{1-t}\dd t
 =\frac{a-b}{\log a-\log b},\qquad a,b>0,
\end{equation}
with the continuous value $\theta(a,a)=a$. We write $\theta_1(a,b)=\partial_a\theta(a,b)$ and $\theta_2(a,b)=\partial_b\theta(a,b)$ for its first partial derivatives. For $\rho\in\Pplus$, define
\begin{align}
 W(\rho)&=\sum_{\{i,j\}\in E}w_{ij}\theta(\rho_i,\rho_j)(e_i-e_j)(e_i-e_j)^T,\notag\\
 C_\rho&=\diag\rho-\rho\rho^T,\qquad
 K_\lambda(\rho)=W(\rho)+\lambda C_\rho,\quad\lambda\ge0.\label{eq:operator}
\end{align}
The restriction of $K_\lambda$ to $\one^\perp$ is positive definite. It defines the Riemannian metric
\begin{equation}\label{eq:metric}
 g_{\lambda,\rho}(\sigma,\tau)=\sigma^TK_\lambda(\rho)^\dagger\tau,
 \qquad \sigma,\tau\in T_\rho\Pplus=\one^\perp,
\end{equation}
where $\dagger$ denotes the Moore--Penrose inverse. Thus the \emph{inverse metric operators}, not the metric tensors, are added in \eqref{eq:operator}.

For the relative entropy
\[
 H(\rho)=\sum_i\rho_i\log(N\rho_i),\qquad h_i=\log(N\rho_i),
\]
and a potential $\phi\in\R^N/\R\one$, set
\[
 \A_\lambda(\rho,\phi)=\phi^TK_\lambda\phi,\qquad
 \B_\lambda(\rho,\phi)=\Hess_{g_\lambda}H(\rho)[K_\lambda\phi,K_\lambda\phi].
\]
We investigate the optimal lower bound
\begin{equation}\label{eq:kappa}
 \kappa_\lambda(G)=\inf_{\rho\in\Pplus,\,\phi\notin\R\one}
 \frac{\B_\lambda(\rho,\phi)}{\A_\lambda(\rho,\phi)}.
\end{equation}
The word \emph{entropic curvature} below refers precisely to \eqref{eq:kappa}. It does not denote the sectional curvature or the Ricci tensor of $g_\lambda$.

\subsection{Boundary and global classification}
Our main theorem is local at each boundary support, with a global graph-theoretic consequence.

\begin{theorem}[Two-step criterion]\label{thm:classification}
Fix $\lambda>0$.
\begin{enumerate}
\item Let $\rho^*\in\Pbar$ and $S=\supp\rho^*$. There exist a relative neighborhood $U$ of $\rho^*$ in $\Pbar$ and $M<\infty$ such that
\begin{equation}\label{eq:local-bound}
 \B_\lambda(\rho,\phi)\ge-M\A_\lambda(\rho,\phi)
 \quad(\rho\in U\cap\Pplus,\ \phi\in\R^N)
\end{equation}
if and only if $\max_{v\in V}\dist(v,S)\le2$.
\item Consequently,
\[
 \kappa_\lambda(G)>-\infty
 \quad\Longleftrightarrow\quad \diam(G)\le2.
\]
If $\diam(G)\ge3$, then $\kappa_\lambda(G)=-\infty$ for every $\lambda>0$.
\end{enumerate}
\end{theorem}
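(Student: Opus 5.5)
The plan is to compute $\B_\lambda$ explicitly via the Otto-calculus formula for gradient flows with Onsager operator $K_\lambda$. For $\sigma=K_\lambda\phi$ we have
\[
 \B_\lambda(\rho,\phi)=\tfrac12\bigl\langle \phi, DK_\lambda(\rho)[K_\lambda h]\,\phi\bigr\rangle-\bigl\langle \phi, DK_\lambda(\rho)[K_\lambda\phi]\,h\bigr\rangle+\bigl\langle K_\lambda\phi,\diag(1/\rho)K_\lambda\phi\bigr\rangle,
\]
with $h=\log(N\rho)$. This is a Bakry--Émery-type $\Gamma_2$ expression. Expanding $K_\lambda=W+\lambda C_\rho$ splits $\B_\lambda$ into three parts: a pure transport part (the Erbar--Maas Hessian), a pure reaction part, and cross terms. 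I will first check that the pure reaction part is bounded below by a multiple of $\phi^TC_\rho\phi$ uniformly in $\rho$. The Fisher--Rao geometry is a sphere of constant curvature, and entropy along geodesics there has Hessian bounded below up to a constant, because $\diag(1/\rho)$ tempered by the $C_\rho$ factors gives $\sum_i\rho_i(\phi_i-\bar\phi)^2\cdot(\dots)$. All possible singularities therefore come from terms carrying $\theta$-derivatives and $\log\rho$ on edges where one or both endpoints degenerate. The locality of (1) means we only need expansions of these terms as $\rho\to\rho^*$ along arbitrary multiscale sequences $\rho_i\to0$ for $i\notin S$.

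\emph{Sufficiency.} Assume every vertex is within distance two of $S$. I will classify vertices as $S$ (positive mass), $N_1$ (distance one), and $N_2$ (distance two). Then I will bound each dangerous term, which has the shape $w_{ij}\theta_1(\rho_i,\rho_j)(\phi_i-\phi_j)^2(h_i-h_j)$ or the third-order transport term. Each such term is controlled either by a good positive term of $\B$ (the $\diag(1/\rho)$ term restricted to small coordinates, which is huge) or by $\A$. The key mechanism is that the Fisher reaction gives $\A\ge\lambda\sum\rho_i(\phi_i-\bar\phi_\rho)^2$, and $\bar\phi_\rho$ is essentially an average over $S$. So $\phi$ on $S$ is controlled by $\A$. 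For $i\in N_1$, the edge to $S$ has weight $\theta(\rho_i,\rho_s)\approx \rho_s/\log(1/\rho_i)$. That is small only logarithmically, and it controls $\phi_i-\phi_s$ up to a $\log$ factor that the $\log$ in $h$ absorbs. Vertices in $N_2$ are controlled through a neighbor in $N_1$. Here the good term $\sum_i (K\phi)_i^2/\rho_i$ is needed to absorb the $\log\rho_i\cdot\log\rho_j$ growth coming from the two consecutive degenerate edges.

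This is the step I expect to be the main obstacle. The relative scales of $\rho_i$, $i\notin S$, are arbitrary, so I will order the small coordinates and use a case analysis on ratios $\rho_i/\rho_j$. I will try to use the concavity and $1$-homogeneity of $\theta$ (e.g.\ $\theta_1(a,b)(a-b)\le\theta(a,b)$) to bound each cross term by Cauchy--Schwarz against $\A$ and the good term with constants independent of scales. A compactness argument over the finitely many scale orderings then gives a uniform $M$ on a neighborhood $U$.

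\emph{Necessity.} If some $v$ has $\dist(v,S)\ge3$, I will construct an explicit family: take a path $s\!-\!a\!-\!b\!-\!v$ with $s\in S$ and $a,b,v\notin S$, and set $\rho_a=\rho_b=\varepsilon$, $\rho_v=\varepsilon^{2}$ (or a tuned power), with the remaining mass near $\rho^*$. I will take $\phi$ as an indicator-type potential on $\{b,v\}$ (or on $v$ alone), chosen so that $K_\lambda\phi$ is supported near $v,b$. Then $\A$ is of order $\varepsilon/\log(1/\varepsilon)$ plus a reaction contribution of order $\varepsilon$. The negative term $-\langle\phi,DK[K\phi]h\rangle$ contains $\log(\rho_b/\rho_v)$ times a $\theta$-derivative, which is not compensated, because no vertex of $S$ is close enough to feed mass through the reaction coupling, while $C_\rho$ couples only at order $\rho_v$. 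The resulting ratio $\B/\A\sim-\log(1/\varepsilon)\to-\infty$. Part (2) then follows. If $\diam G\le2$, every nonempty support satisfies the criterion, and compactness of $\Pbar$ covers it by finitely many neighborhoods $U$. On the compact remainder in $\Pplus$, $\B/\A$ is bounded by continuity, so $\kappa_\lambda>-\infty$. If $\diam G\ge3$, I will pick $u,v$ at distance $3$ and $\rho^*=e_u$; then $S=\{u\}$ violates the criterion and (1) gives $\kappa_\lambda=-\infty$.
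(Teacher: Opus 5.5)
\emph{Sufficiency.} Your layering matches the paper: $S$ is controlled by the Fisher part of $\A_\lambda$, then the first and second neighborhoods are handled in turn. The analysis inside it, however, rests on a false premise. The pure reaction part of the Hessian is $\lambda^2\sum_i\rho_i\psi_i^2\bigl(1+\tfrac12(h_i-H)\bigr)$ with $\psi=\phi-(\rho^T\phi)\one$. For $\phi=e_v$ and $\rho_v=\varepsilon$, its ratio to $\AC=\phi^TC_\rho\phi$ is $\tfrac12\log\varepsilon+O(1)$. So it is not bounded below by any multiple of $\phi^TC_\rho\phi$: entropy is not semiconvex on the Fisher--Rao sphere (compare the $\lambda$-coefficient $1-\tfrac12x\tanh(x/2)$ in the two-state example). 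The cross terms $\tfrac\lambda2(L\rho)_i\psi_i^2$ and $\tfrac\lambda4w_{ij}\theta_{ij}(h_i+h_j-2H)(\phi_i-\phi_j)^2$ are singular in the same way. These Fisher-generated terms are the \emph{only} source of unboundedness, since the transport part alone satisfies $\B_0\ge-C_0\AW$ on every finite graph. So your diagnosis is inverted.

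These terms also cannot be absorbed by $\A_\lambda$. Take $\phi=e_t$ with $t$ adjacent to $S$ and the other vanishing masses comparable to $\rho_t$. Then $\tfrac\lambda2(L\rho)_t\psi_t^2\approx-\tfrac\lambda2 n_t$ is of order one, while $\A_\lambda=O(1/\log(1/\rho_t))$ because $\theta(\rho_s,\rho_t)\approx\rho_s/\log(1/\rho_t)$. The logarithm in $h$ is the problem, not the cure. Nor is $\sigma^TD_\rho^{-1}\sigma$ a free positive term: for this $\phi$ it is cancelled at leading order by $h^TDK_\lambda[\sigma]\phi$. (Your Hessian formula also has both derivative terms with the wrong sign. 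From $\ddot\rho=DK_\lambda[\sigma]\phi-\tfrac12K_\lambda q$ one gets $\sigma^TD_\rho^{-1}\sigma+h^TDK_\lambda[\sigma]\phi-\tfrac12(K_\lambda h)^Tq$, consistent with the two-state value $qH''+\tfrac12q'H'$.)

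The missing idea is to retain the nonnegative inflow form $\PP=\tfrac12\sum_{\{i,j\}\in E}w_{ij}[\theta_1(\rho_i,\rho_j)n_i+\theta_2(\rho_i,\rho_j)n_j](\phi_i-\phi_j)^2$, with $n_i=\sum_jw_{ij}\rho_j$, which survives in the transport part. One then proves $\B_\lambda\ge\tfrac12\PP-C_0\AW-C_\lambda\Q_\eta$ for an entropy-weighted form $\Q_\eta=\sum_i\eta_i\phi_i^2$, and shows $\Q_\eta\le\varepsilon\PP+C_\varepsilon\A_\lambda$ near $\rho^*$. The $S$--$T$ coefficients of $\PP$ blow up like $1/(\rho_t\log^2\rho_t)$. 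At a second-layer vertex $u$, $\eta_u$ involves all neighbors of $u$ and must be small relative to the $\PP$-coefficient of one edge into the first layer, uniformly in unrelated vanishing rates. This needs logarithmic-mean derivative bounds uniform in the endpoint ratio. A case split over finitely many orderings cannot supply them, because ratios within one ordering are unbounded.

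\emph{Necessity.} Your family gives $\B_\lambda/\A_\lambda\to+\infty$, not $-\infty$. Take $\rho_b=\varepsilon$, $\rho_v=\varepsilon^2$, $\phi=e_v$, and the other masses outside $S$ of order $\varepsilon$. Vertex $v$ receives inflow $n_v\ge w_{vb}\varepsilon$, while $\theta_1(\rho_v,\rho_b)\approx1/(\varepsilon\log^2(1/\varepsilon))$. So $\PP$, and hence $\B_\lambda$, contains a positive term of order $1/\log^2(1/\varepsilon)$. Against it stand only $\A_\lambda=O(\varepsilon/\log(1/\varepsilon))$ and negative terms of order $\varepsilon\log(1/\varepsilon)$. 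With $\phi=\one_{\{b,v\}}$ it is worse: the edge $a$--$b$ has an inflow coefficient of order one, since $a$ is fed by $s$.

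The paper instead puts the same mass $\varepsilon$ on every vertex outside $S$ and rescales $\rho^*$ on $S$. Because $\dist(v,S)\ge3$, $L\rho$ then vanishes on the closed neighborhood of $v$, and every edge at $v$ has equal endpoint masses. The explicit coefficients give $\B_\lambda/\A_\lambda=\tfrac\lambda2(h_v-H)+O(1)=\tfrac\lambda2\log\varepsilon+O(1)$. So the negative term is the Fisher deviation $h_v-H$, not a transport term in $\log(\rho_b/\rho_v)$, which is zero in this family. Once part (1) is established, your deduction of part (2), by covering $\Pbar$ and testing at $\rho^*=e_u$, is fine.
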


The bound in \eqref{eq:local-bound} holds simultaneously for all potentials. In particular, the potential may vary along a sequence of probabilities approaching the boundary. No common scale, power-law relation, or lower bound on the ratios of vanishing coordinates is imposed. The finite constant may depend on $G$, $\lambda$, and the boundary neighborhood.

At $\lambda=0$, finite graphs have a finite pure-transport lower bound without a diameter restriction. We recover this fact directly in \cref{lem:inflow}. The positive-parameter criterion is therefore a genuine singular-perturbation statement.

\begin{theorem}[Stability in the Fisher parameter]\label{thm:lipschitz}
If $\diam(G)\le2$, then $\lambda\mapsto\kappa_\lambda(G)$ is Lipschitz on every finite interval $[0,\Lambda]$. In particular, $\kappa_0(G)>0$ implies $\kappa_\lambda(G)>0$ for all sufficiently small $\lambda>0$.
\end{theorem}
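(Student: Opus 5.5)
The plan is to exploit the explicit polynomial dependence of $\A_\lambda$ and $\B_\lambda$ on $\lambda$. Since $K_\lambda=W+\lambda C_\rho$, we have $\A_\lambda=\A_W+\lambda\A_C$ with $\A_W=\phi^TW\phi$ and $\A_C=\phi^TC_\rho\phi=\Var_\rho(\phi)$. Expanding the Hessian of $H$ along the gradient-flow direction $K_\lambda\phi$ (Otto-type formula $\B_\lambda=\tfrac12\,\partial_t\A_\lambda$ along $\dot\rho=-K_\lambda\nabla H$, plus the $\nabla H$-term) gives
\[
 \B_\lambda=\B_{WW}+\lambda\,\B_{WC}+\lambda^2\B_{CC}.
\]
Here $\B_{WW}$ is the pure-transport quantity of \cref{lem:inflow}, $\B_{CC}$ is the pure Fisher--Rao entropy Hessian, and $\B_{WC}$ collects the mixed terms. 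These mixed terms are where the logarithmic-mean derivatives $\theta_1,\theta_2$ meet the covariance, and they are the terms responsible for the diameter obstruction. Writing $F_\lambda(\rho,\phi)=\B_\lambda/\A_\lambda$, the goal is to show that for $0\le\lambda<\mu\le\Lambda$,
\[
 |\kappa_\mu(G)-\kappa_\lambda(G)|\le L(\mu-\lambda).
\]
Because $\kappa_\lambda$ is an infimum, it suffices to prove $|F_\mu-F_\lambda|\le L(\mu-\lambda)$ only on the near-minimizing sets
\[
 \{F_\lambda\le\kappa_\lambda+1\}\quad\text{and}\quad\{F_\mu\le\kappa_\mu+1\}.
\]
Indeed, taking a near minimizer for one parameter and testing it at the other gives each of the two one-sided inequalities.

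\emph{First step: uniform finiteness.} I will establish that $\kappa_\lambda(G)\ge-M_\Lambda$ for all $\lambda\in[0,\Lambda]$ with one constant. This means re-reading the sufficiency half of \cref{thm:classification}: for $\diam(G)\le2$, the support of every $\rho^*\in\Pbar$ reaches all vertices in two steps. The local constants there come from estimates of the form
\[
 \B_{WC}\ge -c\,(\A_W+\A_C)-\epsilon\,(\text{positive parts of }\B_{WW},\ \lambda\B_{CC}),
\]
which depend on $\lambda$ only through bounded factors. Compactness of $\Pbar$ then yields a finite cover with $\lambda$-uniform constants, and at $\lambda=0$ the bound is supplied by \cref{lem:inflow}. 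Combined with the trivial upper bound obtained by testing at the uniform measure, this gives $|\kappa_\lambda|\le M_\Lambda$ on $[0,\Lambda]$.

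\emph{Second step: derivative bound on the near-minimizing set.} For fixed $(\rho,\phi)$,
\[
 \partial_\lambda F_\lambda=\frac{\B_{WC}+2\lambda\B_{CC}-F_\lambda\A_C}{\A_\lambda}.
\]
On the near-minimizing set $F_\lambda$ is bounded by the first step. I then aim for the key two-sided estimate
\[
 |\B_{WC}+2\lambda\B_{CC}|+|\A_C|\le L\bigl(\A_\lambda+(\B_\lambda+M_\Lambda\A_\lambda)\bigr),
\]
whose right-hand side is at most $L'\A_\lambda$ on the near-minimizing set. The idea is that any large positive part of the mixed or Fisher terms is absorbed by $\B_\lambda$ itself, which is small there. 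Integrating in $\lambda$ along a fixed pair requires that this pair stay near-minimizing for the intermediate parameters. I will handle this by a continuity (Grönwall-type) argument: the inequality $F_s\le\kappa_s+2$ propagates from $s=\lambda$ as long as $|\partial_sF_s|\le L'$ and $\kappa_s$ is controlled, and a short bootstrap on subintervals of length $\sim1/L'$ closes it.

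The main obstacle is the term $F_\lambda\A_C/\A_\lambda$ as $\lambda\to0$. The covariance $\A_C$ is \emph{not} dominated by the log-mean Dirichlet form $\A_W$ near the boundary: if two heavy vertices are joined only through a light vertex, then $\rho_i\rho_j(\phi_i-\phi_j)^2$ can dwarf the logarithmic-mean Dirichlet energy. I would handle this using the diameter-two hypothesis exactly as in \cref{thm:classification}. For such configurations $\B_{WC}$ contains a compensating positive contribution of size comparable to $\A_C$, coming from inflow into the light middle vertex. This is the same mechanism that fails at distance three. If this contribution exceeds $|F_\lambda|\A_C$ by a margin, the derivative has the right sign, and only a one-sided bound is needed for that inequality. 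I expect that establishing this comparison, uniformly over multiscale degenerations, is the heart of the proof. The final statement, that $\kappa_0(G)>0$ implies $\kappa_\lambda(G)>0$ for small $\lambda$, is then immediate from continuity at $\lambda=0$.
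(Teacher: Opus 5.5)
Your skeleton matches the paper's. It has the polynomial dependence $\B_\lambda=\B_0+\lambda\B_1+\lambda^2\B_2$ and $\A_\lambda=\AW+\lambda\AC$, a $\lambda$-uniform lower bound, the upper bound $\gamma_G+\lambda$ at the uniform state, and the reduction to near-minimizing pairs. Your ``key two-sided estimate'' is the right target.

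Your first step is sound, for a reason worth stating. The $1/\lambda$ that \eqref{eq:support-layer} introduces in \cref{lem:local-absorption} comes from $\sum_{s\in S}\phi_s^2\le\AC/m$. In \eqref{eq:master-lower} it is multiplied by $C_\lambda=O(\lambda)$, so locally $C_\lambda\Q_\eta\le\frac14\PP+C\lambda\AC$ with $C$ independent of $\lambda\in(0,\Lambda]$.

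\textbf{The gap.} The gap is the key estimate itself as $\lambda\downarrow0$, which you leave open. You need $\AC/\A_\lambda$ and $|\B_1|/\A_\lambda$ bounded on near-minimizing pairs with no factor $\lambda$ left to compensate. Neither proposed mechanism supplies this.
\begin{itemize}
\item Absorbing positive parts of your $\B_{WC}$ (the paper's $\B_1$) into $\B_\lambda$ costs $1/\lambda$, because $\B_1$ enters $\B_\lambda$ only as $\lambda\B_1$.
\item A positive part of $\B_1$ comparable to $\AC$ signs the derivative at best from below. That serves only $\kappa_\lambda\le\kappa_\mu+L(\mu-\lambda)$ for $\lambda<\mu$.
\item The reverse inequality needs an upper bound on $\partial_\lambda F_\lambda$ at near-minimizers of the smaller parameter. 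A large positive $\B_1/\A_\lambda$ destroys exactly that.
\end{itemize}
So configurations with $\AC\gg\A_\lambda$ must be excluded from the near-minimizing set uniformly in $\lambda$. A term weighted by $\lambda$ cannot do this. For example, on the path $1\sim2\sim3$ with $\rho=(p,\epsilon,p)$ and $\phi=(1,0,-1)$, one finds $\AC\to1$, $\AW\sim w/|\log\epsilon|$, and $\B_1\to w/2$. What makes $\B_\lambda/\A_\lambda$ large there is the $\lambda$-independent inflow form inside $\B_0$, which is of order $w^2/(\epsilon\log^2\epsilon)$.

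\textbf{The missing idea.} \Cref{lem:inflow} keeps the nonnegative form $\PP$ in $\B_0$. Your first step, run through \eqref{eq:master-lower}, then gives $\B_\lambda\ge\frac14\PP-D\A_\lambda$ for all $\lambda\in[0,\Lambda]$. Hence $\PP/\A_\lambda$ is bounded on near-minimizers.

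The decisive ingredient is \cref{lem:transport-absorption}: if $\diam(G)\le2$, then $\Q_\eta\le\varepsilon\PP+C_\varepsilon\AW$ for all $\rho\in\Pplus$, with no $\lambda$ at all. Its proof replaces the Fisher anchor \eqref{eq:support-layer} by transport paths of length at most two from a root in $S$. This uses the diameter hypothesis a second time. The middle vertex of such a path lies either in $S$, where the mobility is at least $\omega m$, or in the first layer, where both inflow coefficients tend to infinity. One then returns to the centered gauge by Jensen's inequality. Combined with $|\B_1|+|\B_2|+\AC\le C\Q_\eta$ from \eqref{eq:absolute-controls}, this yields your key estimate with uniform constants.

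It also makes your derivative and bootstrap argument unnecessary. The differences $\B_\mu-\B_\lambda=(\mu-\lambda)\B_1+(\mu^2-\lambda^2)\B_2$ and $\A_\mu-\A_\lambda=(\mu-\lambda)\AC$ are controlled by $\Q_\eta$ at the single base parameter. The exact quotient identity then gives $|F_\mu-F_\lambda|\le L|\mu-\lambda|$ for short separations. The pair never needs to remain near-minimizing at intermediate parameters.
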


\Cref{thm:lipschitz} includes the pure-transport endpoint; Lipschitz continuity does not require monotonicity of $\kappa_\lambda$.

\subsection{Sharp dissipation and stability}
Let $L$ be the positive weighted Laplacian, $(Lf)_i=\sum_jw_{ij}(f_i-f_j)$. Products such as $LW(\rho)$ below denote ordinary matrix composition (apply $W(\rho)$ first, then $L$). The negative entropy gradient flow is
\begin{equation}\label{eq:flow}
 \dot\rho=-L\rho-\lambda\rho\odot(h-H(\rho)\one),
\end{equation}
where $\odot$ denotes coordinatewise multiplication. For a positive probability, write
\[
 R(\rho)=\osc_i\log\rho_i=\max_i\log\rho_i-\min_i\log\rho_i.
\]
For ordered pairs $i\ne j$ and $r\ge0$, define
\begin{align}
 F_{ij}(r)&=w_{ij}(e^r-e^{-r})
 +\sum_{k\notin\{i,j\}}\min\{w_{ik}(1-e^{-r}),\,w_{jk}(e^r-1)\},\notag\\
 F_G(r)&=\min_{i\ne j}F_{ij}(r).\label{eq:FG}
\end{align}

\begin{theorem}[Optimal instantaneous amplitude dissipation]\label{thm:amplitude}
For every $\lambda\ge0$ and every positive solution of \eqref{eq:flow},
\begin{equation}\label{eq:R-dissipation}
 D^+R(\rho_t)\le-\lambda R(\rho_t)-F_G(R(\rho_t)),
\end{equation}
where $D^+$ is the upper right Dini derivative. For each $r>0$, a positive two-level probability of amplitude $r$ attains equality at time zero. Hence $F_G$ is the largest additional dissipation function valid for all positive initial probabilities when only $G$ and the current amplitude are specified.

Moreover, with
\begin{equation}\label{eq:cG}
 c_G=\min_{i\ne j}\left(2w_{ij}+\sum_{k\notin\{i,j\}}\min\{w_{ik},w_{jk}\}\right),
\end{equation}
one has $F_G(r)/r\to c_G$ as $r\downarrow0$ and $F_G(r)\ge c_G(1-e^{-r})$. In particular, for every fixed $\lambda>0$,
\begin{equation}\label{eq:common-threshold}
 \kappa_\lambda(G)>-\infty
 \quad\Longleftrightarrow\quad\diam(G)\le2
 \quad\Longleftrightarrow\quad c_G>0.
\end{equation}
If $\diam(G)\ge3$, then $F_G\equiv0$.
\end{theorem}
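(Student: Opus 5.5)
The plan is to compute the time derivative of the log-coordinates directly from \eqref{eq:flow}, bound the derivative of the log-ratio of the current maximal and minimal coordinates term by term, and pass to the Dini derivative of $R$ by a Danskin-type argument. The remaining claims then follow from elementary properties of $F_{ij}$.

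\emph{Log-ratio derivative.} Dividing \eqref{eq:flow} by $\rho_i$ gives
\[
 \tfrac{d}{dt}\log\rho_i=-\sum_k w_{ik}\bigl(1-\rho_k/\rho_i\bigr)-\lambda\bigl(\log(N\rho_i)-H(\rho)\bigr).
\]
Fix a pair $(i,j)$ at which $\log\rho$ attains its maximum and minimum, and set $r=R(\rho)$. In the difference $\frac{d}{dt}(\log\rho_i-\log\rho_j)$ the reaction terms contribute exactly $-\lambda r$, because $H$ cancels. Write $a_k=\log\rho_k-\log\rho_j\in[0,r]$. The transport part then equals $-\sum_k f_k(a_k)$, where
\[
 f_k(a)=w_{ik}(1-e^{a-r})+w_{jk}(e^a-1).
\]
\begin{itemize}
\item The terms $k=j$ (where $a_j=0$) and $k=i$ (where $a_i=r$) together give $w_{ij}(e^r-e^{-r})$.
\item For $k\notin\{i,j\}$, the key observation is that $f_k'(a)=e^a(w_{jk}-w_{ik}e^{-r})$ has constant sign on $[0,r]$. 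Hence $f_k$ is monotone, and its minimum over $[0,r]$ is attained at an endpoint. This gives $f_k(a_k)\ge\min\{w_{ik}(1-e^{-r}),w_{jk}(e^r-1)\}$.
\end{itemize}
Therefore
\[
 \tfrac{d}{dt}(\log\rho_i-\log\rho_j)\le-\lambda r-F_{ij}(r)\le-\lambda r-F_G(r).
\]
Since $R$ is the maximum over ordered pairs of the smooth functions $\log\rho_i-\log\rho_j$, the standard Danskin formula for maxima of finitely many $C^1$ functions gives $D^+R$ as the maximum of these derivatives over the active pairs. This yields \eqref{eq:R-dissipation}.

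\emph{Sharpness.} Choose $(i,j)$ minimizing $F_{ij}(r)$. Put $i$ on the high level and $j$ on the low level. Put each other vertex $k$ on the low level if $w_{ik}(1-e^{-r})\le w_{jk}(e^r-1)$, and on the high level otherwise. Normalize, keeping the level ratio $e^r$. For the pair $(i,j)$, every inequality above becomes an equality, so its derivative is $-\lambda r-F_G(r)$. Every other active pair $(i',j')$ is also a high/low pair, and its derivative is $\le-\lambda r-F_{i'j'}(r)\le-\lambda r-F_G(r)$. Hence $D^+R=-\lambda r-F_G(r)$ at time zero, and optimality follows.

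The one point needing care is that in this construction several pairs are active simultaneously. Because of this, the max in the Danskin formula must be controlled by the two-sided estimate just given, rather than by the pair $(i,j)$ alone. I expect this to be the main (mild) obstacle.

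\emph{Asymptotics and threshold.} Termwise, $F_{ij}(r)/r\to 2w_{ij}+\sum_{k}\min\{w_{ik},w_{jk}\}$ as $r\downarrow0$. A minimum over finitely many pairs commutes with this limit, so $F_G(r)/r\to c_G$. The lower bound $F_G(r)\ge c_G(1-e^{-r})$ follows from $e^r-e^{-r}\ge2(1-e^{-r})$ and $e^r-1\ge1-e^{-r}$.

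For the threshold, $c_G>0$ holds iff every pair $i\ne j$ is adjacent or has a common neighbor, i.e.\ iff $\diam(G)\le2$. Combining this with \cref{thm:classification} gives \eqref{eq:common-threshold}. Finally, if $\dist(i,j)\ge3$, then $w_{ij}=0$ and $w_{ik}w_{jk}=0$ for all $k$. Hence $F_{ij}\equiv0$, and so $F_G\equiv0$.
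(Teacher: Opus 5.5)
Your proposal is correct and follows essentially the same route as the paper: the reaction terms reduce to $-\lambda r$, and each off-pair transport term is minimized at an endpoint (your monotonicity of $f_k$ in $a$ is the paper's observation that the summand is affine in $y_k=e^{a_k}$). Sharpness uses the same two-level state and the same sandwich between the chosen active pair and the universal bound, while the small-$r$ limit, the bound $F_G(r)\ge c_G(1-e^{-r})$, and the diameter threshold are handled exactly as in the paper.
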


Instantaneous attainability does not imply that a single trajectory is an equality case at every time. Similarly, $F_G\equiv0$ does not mean that the graph heat flow fails to mix; it means that a strictly positive extra instantaneous rate cannot be inferred from the present amplitude alone.

The main contributions are the uniform boundary classification in \cref{thm:classification}, its parameter stability, and the sharp amplitude function in \cref{thm:amplitude}. \Cref{sec:entropy,sec:hilbert} give complementary consequences: an amplitude-dependent variance--entropy inequality, graph-dependent entropy prefactors, and exact contraction bounds in Hilbert's projective metric. The projective-contraction results are stated with their exact uniform and initial-state-dependent regimes.

\subsection{Relation to earlier work}
Maas \cite{Maas2011}, Erbar and Maas \cite{ErbarMaas2012}, and Mielke \cite{Mielke2013} established discrete entropy gradient structures and entropic curvature for finite reversible chains. The Hamiltonian formula for the entropy Hessian used here is part of that theory. Li and Lu \cite{LiLu2022} developed a general framework of probability-dependent edge mobilities; the interior operator \eqref{eq:operator} belongs to that framework. Our problem is the uniform lower bound over all interior probabilities approaching arbitrary boundary supports, rather than the existence of an interior Hessian formula.

For finite reversible chains, Kamtue, Liu, M\"unch, and Peyerimhoff \cite[Theorems~1.3 and~4.2; Lemma~4.1]{KamtueEtAl2025} give explicit universal curvature bounds and a related concavity--Young-inequality estimate. We compare the constants and normalizations after \cref{lem:inflow}; the additional positive inflow retained here is used to control the state-dependent Fisher contribution. For nonlinear Markov dynamics, Erbar, Fathi, and Schlichting \cite{ErbarFathiSchlichting2020} assume Lipschitz dependence of the transition rates on the closed simplex. The effective logarithmic-mean rates of our model fail that boundary assumption. \Cref{prop:representation} makes both the inclusion in the interior mobility framework and the obstruction to a fixed-chain representation explicit.

The mixing of transport and Fisher-type reaction also has a continuous counterpart in spherical Hellinger--Kantorovich geometry. Entropy dissipation without geodesic convexity is developed by Kondratyev and Vorotnikov \cite{KondratyevVorotnikov2019}; further global decay results appear in Mielke and Zhu \cite{MielkeZhu2025}. Carrillo, Chen, Huang, Huang, and Wei \cite{CarrilloEtAl2026} study Fisher--Rao geodesic convexity and functional inequalities. The novelty considered here is the graph-theoretic classification of uniform boundary Hessian bounds for the specified mixed mobility.

The coefficient $c_G$ in \eqref{eq:cG} is the classical continuous-time Dobrushin--Hopf oscillation coefficient; see Gaubert and Qu \cite{GaubertQu2015}. Our amplitude result retains the full dependence on $r$, proves pointwise attainability, and relates its graph threshold to the boundary Hessian criterion. The Hilbert estimate has a mass-conserving Fisher damping term and distinguishes contraction between arbitrary pairs from decay toward the fixed uniform equilibrium.

Recent related work further separates the settings. Mao, Wang, and Xue \cite{MaoWangXue2026} study nonnegative measures on finite Markov chains with a source constrained to a fixed positive spatial profile; the source is controlled by one scalar amplitude. Our state space has fixed total mass, and the Fisher covariance acts in every tangent direction. Crucinio and Pathiraja \cite{CrucinioPathiraja2026} prove preservation of strong log-concavity and convergence estimates for continuous-space Wasserstein--Fisher--Rao flows under conditions on the target distribution. The issue here is instead a uniform entropy-Hessian bound at every boundary support of a finite probability simplex. These results provide context for the two-step boundary criterion and its parameter and amplitude consequences.

\subsection{Proof strategy and scope}
The proof of \cref{thm:classification} begins with the exact expansion $\B_\lambda=\B_0+\lambda\B_1+\lambda^2\B_2$. A nonnegative inflow form is retained in $\B_0$, while all potentially singular negative terms are controlled by one entropy-weighted quadratic form. Two scalar logarithmic-mean estimates allow arbitrary endpoint ratios. They lead to an absorption argument through the support, its first neighborhood, and its second neighborhood. A finite cover of the closed simplex gives the global lower bound. A distance-three test family proves the converse. A transport-only version of the same absorption argument proves \cref{thm:lipschitz}.

The curvature statements concern the interior Riemannian Hessian for a uniform reference probability. Extension to the metric completion or a nonuniform reversible reference law requires separate analysis. The proofs of the classification and stability theorems use analytic estimates uniform in all interior states and potential directions.

\section{Geometry and exact entropy Hessian}\label{sec:geometry}

\subsection{The action and the Fisher normalization}
The kernel of $W(\rho)$ is $\R\one$, by positivity of the edge mobilities and connectedness. Also
\[
 C_\rho=\sum_{i<j}\rho_i\rho_j(e_i-e_j)(e_i-e_j)^T.
\]
Thus, for $\psi=\phi-(\rho^T\phi)\one$,
\begin{equation}\label{eq:action}
 \A_\lambda=\AW+\lambda\AC,\quad
 \AW=\sum_{\{i,j\}\in E}w_{ij}\theta_{ij}(\phi_i-\phi_j)^2,\quad
 \AC=\sum_i\rho_i\psi_i^2=\Var_\rho(\phi),
\end{equation}
where $\theta_{ij}=\theta(\rho_i,\rho_j)$. In particular, $\A_\lambda>0$ for every nonconstant potential.

One may also describe \eqref{eq:metric} by an infimum over transport and reaction. Choose an orientation of the graph, let $B$ be its incidence matrix, and let $M_\rho$ be the diagonal matrix of edge mobilities $w_{ij}\theta_{ij}$. For $\lambda>0$ and $\sigma\in\one^\perp$,
\begin{equation}\label{eq:infconvolution}
 \sigma^TK_\lambda^\dagger\sigma
 =\min_{BJ+r=\sigma,\ \one^Tr=0}
 \left\{J^TM_\rho^{-1}J+\lambda^{-1}\sum_i\frac{r_i^2}{\rho_i}\right\}.
\end{equation}
Indeed, the Lagrange multiplier $\phi$ for the constraint gives
$J=M_\rho B^T\phi$ and $r=\lambda C_\rho\phi$. The constraint becomes $K_\lambda\phi=\sigma$, and the value equals $\phi^TK_\lambda\phi$. Strict convexity in $(J,r)$ proves minimality. Formula \eqref{eq:infconvolution} specifies the mass-conserving reaction normalization.

The integral representation \eqref{eq:logmean} shows that $\theta$ is positive, symmetric, increasing, jointly concave, and homogeneous of degree one. In particular,
\begin{equation}\label{eq:euler}
 a\theta_1(a,b)+b\theta_2(a,b)=\theta(a,b),\qquad
 \theta(a,b)(\log a-\log b)=a-b.
\end{equation}
Since $\nabla H=h+\one$ and $K_\lambda\one=0$, these identities give $Wh=L\rho$ and $C_\rho h=\rho\odot(h-H\one)$, proving \eqref{eq:flow}.

\subsection{Hamiltonian formula and parameter coefficients}
\begin{proposition}[Exact Hessian expansion]\label{prop:hessian}
Let $\sigma=K_\lambda\phi$, and choose a smooth extension of $K_\lambda$ off the affine probability hyperplane. Set $q_i=\phi^T(\partial_iK_\lambda)\phi$. Then
\begin{equation}\label{eq:hamilton-hessian}
 \B_\lambda
 =\sigma^TD_\rho^{-1}\sigma+h^TDK_\lambda[\sigma]\phi
 -\tfrac12(K_\lambda h)^Tq,
 \qquad D_\rho=\diag\rho.
\end{equation}
The expression is independent of the chosen extension. With $\psi=\phi-(\rho^T\phi)\one$, it has the expansion
\begin{equation}\label{eq:quadratic}
 \B_\lambda=\B_0+\lambda\B_1+\lambda^2\B_2,
\end{equation}
where
\begin{align}
 \B_0&=\phi^T(LW)\phi-\frac12\sum_{\{i,j\}\in E}w_{ij}
 [\theta_1(\rho_i,\rho_j)(L\rho)_i+\theta_2(\rho_i,\rho_j)(L\rho)_j]
 (\phi_i-\phi_j)^2,\label{eq:B0}\\
 \B_1&=\frac12\sum_i(L\rho)_i\psi_i^2
 +\sum_{\{i,j\}\in E}w_{ij}
 \left[\frac{\rho_i+\rho_j}{4}+\frac{\theta_{ij}}4(h_i+h_j-2H+6)\right]
 (\phi_i-\phi_j)^2,\label{eq:B1}\\
 \B_2&=\sum_i\rho_i\psi_i^2\left(1+\frac{h_i-H}{2}\right).\label{eq:B2}
\end{align}
\end{proposition}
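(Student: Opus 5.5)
The plan is to derive \eqref{eq:hamilton-hessian} from the Hamiltonian description of geodesics, and then to expand it in $\lambda$ by treating the transport part and the covariance part separately. For a $g_\lambda$-geodesic through $\rho$ with initial velocity $\sigma=K_\lambda\phi$, the cotangent form of the geodesic equations (as in \cite{Maas2011,ErbarMaas2012}) is
\[
 \dot\rho=K_\lambda(\rho)\phi,\qquad \dot\phi=-\tfrac12\nabla_\rho\bigl(\phi^TK_\lambda(\rho)\phi\bigr)=-\tfrac12 q .
\]
Here the gradient is taken with the chosen smooth extension of $K_\lambda$ off the hyperplane $\one^T\rho=1$, and $\phi$ is understood modulo $\R\one$. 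Since $\nabla H=h+\one$ and $K_\lambda\one=0$, we have $\tfrac{d}{dt}H(\rho_t)=h^TK_\lambda\phi$. Differentiating once more and using $\dot h=D_\rho^{-1}\dot\rho$ gives three terms:
\[
 \sigma^TD_\rho^{-1}\sigma+h^T DK_\lambda[\sigma]\phi+h^TK_\lambda\dot\phi .
\]
Substituting $\dot\phi=-\tfrac12 q$ and using the symmetry of $K_\lambda$ turns the last term into $-\tfrac12 (K_\lambda h)^Tq$. This is \eqref{eq:hamilton-hessian}, because on a geodesic the second derivative equals $\Hess_{g_\lambda}H[\sigma,\sigma]$.

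Next I check independence of the extension. The directional derivative $DK_\lambda[\sigma]$ only involves the tangent direction $\sigma\in\one^\perp$, so it does not depend on the extension. Any other extension has the form $K'=K_\lambda+(\one^T\rho-1)M(\rho)$ for some smooth symmetric $M$. On the hyperplane this gives $\partial_iK'=\partial_iK_\lambda+M$, hence $q'=q+(\phi^TM\phi)\one$. The extra contribution is $-\tfrac12(\phi^TM\phi)(K_\lambda h)^T\one$, which vanishes because $K_\lambda\one=0$. The same argument shows that replacing $\phi$ by $\phi+c\one$ changes nothing.

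For the expansion I will use the natural extensions $W(\rho)$, given by the same formula on $(0,\infty)^N$, and $C_\rho=\diag\rho-\rho\rho^T$. Then
\[
 \partial_iC=e_ie_i^T-e_i\rho^T-\rho e_i^T,\qquad q^C_i=\phi_i^2-2\phi_i(\rho^T\phi)=\psi_i^2-(\rho^T\phi)^2 .
\]
The constant $-(\rho^T\phi)^2$ drops out for the reason just given, so effectively $q^C_i=\psi_i^2$. Similarly,
\[
 q^W_i=\sum_j w_{ij}\,\theta_1(\rho_i,\rho_j)(\phi_i-\phi_j)^2 .
\]
Inserting $\sigma=W\phi+\lambda C\phi$, $K_\lambda=W+\lambda C$ and $q=q^W+\lambda q^C$ into \eqref{eq:hamilton-hessian} gives a quadratic polynomial in $\lambda$. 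Two identities reduce its coefficients: $Wh=L\rho$ and $C_\rho h=\rho\odot(h-H\one)$, both consequences of \eqref{eq:euler}. They replace $K_\lambda h$ by $L\rho+\lambda\,\rho\odot(h-H\one)$.

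The $\lambda^2$ coefficient is the quickest. It collects $(C\phi)^TD^{-1}C\phi$, $h^TDC[C\phi]\phi$ and $-\tfrac12(\rho\odot(h-H\one))^Tq^C$. With $C\phi=\rho\odot\psi$, the first is $\sum\rho_i\psi_i^2$; the second contributes a term $-\sum\rho_i\psi_i^2(h_i-H)+\dots$, which I will rewrite via centering. Together these should produce \eqref{eq:B2}. The $\lambda^0$ coefficient is the pure-transport Hessian. To identify it with \eqref{eq:B0}, I will combine $(W\phi)^TD^{-1}W\phi+h^TDW[W\phi]\phi$ edge by edge. Using $\theta_1(a,b)\,a+\theta_2(a,b)\,b=\theta(a,b)$ and $\theta(\log a-\log b)=a-b$, the factors $\theta_{ij}(h_i-h_j)$ become $\rho_i-\rho_j$, which yields $\phi^TLW\phi$. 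The remaining $-\tfrac12(L\rho)^Tq^W$ gives the $\theta_1,\theta_2$ terms, after symmetrizing each edge sum over its two endpoints.

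The main obstacle is the cross coefficient $\B_1$. It gathers the cross terms $2(W\phi)^TD^{-1}(\rho\odot\psi)$, $h^TDW[C\phi]\phi$, $h^TDC[W\phi]\phi$, $-\tfrac12(L\rho)^Tq^C$ and $-\tfrac12(\rho\odot(h-H\one))^Tq^W$. The term $-\tfrac12(L\rho)^Tq^C$ is directly $\tfrac12\sum(L\rho)_i\psi_i^2$. The others must be rewritten edgewise as multiples of $(\phi_i-\phi_j)^2$. For this I plan to use the identity $\phi_i\mapsto\psi_i$ (legitimate because $W$ kills constants), then $(\psi_i-\psi_j)(\rho_i\psi_i-\rho_j\psi_j)$-type expansions, and the identities for $\theta_1,\theta_2$ derived from the integral representation \eqref{eq:logmean}. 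I expect the delicate point to be verifying that mixed terms like $\psi_i\psi_j$ cancel exactly, leaving the coefficient $\tfrac{\rho_i+\rho_j}4+\tfrac{\theta_{ij}}4(h_i+h_j-2H+6)$. I would check this on a single edge with $N=2$ before doing the general computation.
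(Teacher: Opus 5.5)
Your plan is correct and follows the paper's proof. It uses the same Hamiltonian derivation of \eqref{eq:hamilton-hessian} and the same extension-independence argument ($q$ shifts by a multiple of $\one$, which $K_\lambda$ kills). The expansion is likewise coefficient by coefficient, using $Wh=L\rho$, $C_\rho h=\rho\odot(h-H\one)$, the edge identity $(\phi_i-\phi_j)(A\phi_i-B\phi_j)=\tfrac12(A+B)(\phi_i-\phi_j)^2+\tfrac12(A-B)(\phi_i^2-\phi_j^2)$, and the logarithmic-mean identity $(\log\rho_i-\log\rho_j)(\rho_i\theta_1-\rho_j\theta_2)=\rho_i+\rho_j-2\theta_{ij}$. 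That last identity disposes of the mixed terms you worry about in $\B_1$ and produces the $\tfrac{\rho_i+\rho_j}{4}$ and the constant $6$ in \eqref{eq:B1}.

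The paper shortens the bookkeeping by rewriting \eqref{eq:hamilton-hessian} as $\phi^TD(K_\lambda h)[K_\lambda\phi]-\tfrac12\phi^TDK_\lambda[K_\lambda h]\phi$ and differentiating $K_\lambda h=L\rho+\lambda r$, with $r=\rho\odot(h-H\one)$. This gives your $\lambda^0$ identity at once; what is actually used there is the derivative of $\theta(\log a-\log b)=a-b$, not Euler's identity. It also reduces $\B_1$ to four terms. When you carry out $\B_2$, note that $h^TDC_\rho[C_\rho\phi]\phi=+\sum_i\rho_i\psi_i^2(h_i-H)$ exactly, with a plus sign, which is what makes \eqref{eq:B2} come out.
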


\begin{proof}
The kinetic Hamiltonian is $\frac12\phi^TK_\lambda(\rho)\phi$. Its constrained equations are
\[
 \dot\rho=K_\lambda\phi,\qquad \dot\phi=-\tfrac12q+c(t)\one.
\]
Consequently $\ddot\rho=DK_\lambda[\sigma]\phi-\frac12K_\lambda q$. Since the Euclidean Hessian of $H$ is $D_\rho^{-1}$, two derivatives of $H$ along this geodesic give \eqref{eq:hamilton-hessian}. Two extensions agreeing on the probability hyperplane have identical tangent derivatives, while their corresponding $q$ vectors differ by a multiple of $\one$. This difference is killed by $K_\lambda$.

For the coefficients, choose the statewise gauge $\rho^T\phi=0$. This choice is made at the evaluation point; it is not differentiated as a varying potential. Put
\[
 F=L\rho,\qquad \ell=h-H\one,\qquad r=\rho\odot\ell.
\]
For $z\in\one^\perp$,
\[
 DC_\rho[z]=D_z-z\rho^T-\rho z^T,\qquad
 Dr[z]_i=(\ell_i+1)z_i-\rho_i h^Tz.
\]
Symmetry of the tangent derivatives gives an equivalent Hessian identity,
\[
 \B_\lambda=\phi^TD(K_\lambda h)[K_\lambda\phi]
 -\tfrac12\phi^TDK_\lambda[K_\lambda h]\phi.
\]
As $K_\lambda h=F+\lambda r$, its coefficients are
\begin{align*}
 \B_0&=\phi^T(LW)\phi-\tfrac12\phi^TDW[F]\phi,\\
 \B_1&=\phi^TLC_\rho\phi+\phi^TDr[W\phi]
       -\tfrac12\phi^TDW[r]\phi-\tfrac12\phi^TDC_\rho[F]\phi,\\
 \B_2&=\phi^TDr[C_\rho\phi]-\tfrac12\phi^TDC_\rho[r]\phi.
\end{align*}
The first identity yields \eqref{eq:B0}. Because $C_\rho\phi=\rho\odot\phi$ in the chosen gauge, the last gives \eqref{eq:B2}.

For completeness, we detail the mixed coefficient. On an edge with $a=\rho_i$, $b=\rho_j$, use
\[
 (\phi_i-\phi_j)(A\phi_i-B\phi_j)
 =\tfrac12(A+B)(\phi_i-\phi_j)^2
  +\tfrac12(A-B)(\phi_i^2-\phi_j^2).
\]
Summing this identity yields
\begin{align*}
 \phi^TLC_\rho\phi-\tfrac12\phi^TDC_\rho[F]\phi
 &=\tfrac12\sum_{\{i,j\}\in E}w_{ij}(a+b)(\phi_i-\phi_j)^2,\\
 \phi^TDr[W\phi]
 &=\tfrac12\sum_{\{i,j\}\in E}w_{ij}\theta_{ij}(\ell_i+\ell_j+2)(\phi_i-\phi_j)^2
   +\tfrac12\sum_iF_i\phi_i^2.
\end{align*}
For the remaining term, differentiating the logarithmic mean in the direction $(a\ell_i,b\ell_j)$ gives
\[
 \theta_1a\ell_i+\theta_2b\ell_j
 =\frac{a+b}{2}+\frac{\theta_{ij}}2(\ell_i+\ell_j-2).
\]
For unequal endpoints this follows by differentiating the quotient in \eqref{eq:logmean}; at equal endpoints it follows by continuity. Combining the three expressions proves \eqref{eq:B1}. Restoring $\psi$ in the vertex terms makes the formulas gauge independent.
\end{proof}

\subsection{Why fixed-chain theory does not settle the boundary problem}
\begin{proposition}[Representation comparison]\label{prop:representation}
For $\lambda>0$, the interior operator has the complete-graph mobility representation
\begin{equation}\label{eq:effective-mobility}
 K_\lambda=\sum_{i<j}[w_{ij}\theta(\rho_i,\rho_j)+\lambda\rho_i\rho_j]
 (e_i-e_j)(e_i-e_j)^T.
\end{equation}
It has no representation on the whole positive simplex by a fixed uniform-reversible chain with logarithmic-mean mobility. If $N\ge3$, it also has no representation by fixed pair coefficients multiplied by degree-one homogeneous pair means on that whole simplex.
\end{proposition}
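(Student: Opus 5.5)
The representation \eqref{eq:effective-mobility} will come straight from the edge decomposition of $C_\rho$ recorded at the start of the section. That decomposition gives $C_\rho=\sum_{i<j}\rho_i\rho_j(e_i-e_j)(e_i-e_j)^T$. We add it, multiplied by $\lambda$, to the edge sum defining $W(\rho)$, and set $w_{ij}=0$ off $E$. The two negative statements will be handled by one rigidity principle. A symmetric operator of the form $\sum_{i<j}m_{ij}(e_i-e_j)(e_i-e_j)^T$ determines its pair mobilities uniquely, since $m_{ij}=-(K)_{ij}$ for $i\ne j$. Any alternative representation must therefore reproduce every off-diagonal entry $-(K_\lambda)_{ij}=w_{ij}\theta(\rho_i,\rho_j)+\lambda\rho_i\rho_j$ identically on $\Pplus$.

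\textbf{No fixed uniform-reversible log-mean chain.} Suppose $K_\lambda=\sum_{i<j}a_{ij}\theta(\rho_i,\rho_j)(e_i-e_j)(e_i-e_j)^T$ with constants $a_{ij}\ge0$. Comparing entries gives, for every pair,
\[
 (a_{ij}-w_{ij})\theta(\rho_i,\rho_j)=\lambda\rho_i\rho_j\quad\text{on }\Pplus.
\]
For $N=2$ we use $\rho_1\to0$: the left side tends to $(a_{12}-w_{12})\cdot0$, and we must check orders. Along $\rho=(\varepsilon,1-\varepsilon)$ we have $\theta\sim 1/\log(1/\varepsilon)$, while $\rho_1\rho_2\sim\varepsilon$. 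The ratio $\rho_1\rho_2/\theta$ is then not constant, so no constant $a_{12}-w_{12}$ works. In general, evaluating at $\rho_i=\rho_j=s$ gives the ratio $s$, which varies with $s$ (this is available when $N\ge3$; when $N=2$ use the boundary ratio above, or compare $s=1/2$ with another point). Either way, the ratio $\lambda\rho_i\rho_j/\theta(\rho_i,\rho_j)$ is nonconstant, which is a contradiction.

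\textbf{No fixed coefficients times degree-one homogeneous means ($N\ge3$).} Suppose $m_{ij}(\rho)=c_{ij}M_{ij}(\rho_i,\rho_j)$, where each $M_{ij}$ is a mean, meaning it is positively homogeneous of degree one and depends only on the two endpoint values. Take a pair $\{i,j\}$ and exploit $N\ge3$. Fix $(\rho_i,\rho_j)=(a,b)$ and vary the remaining mass among the other coordinates; this keeps $\rho\in\Pplus$ while the scale is free. Replace $(a,b)$ by $(ta,tb)$ with $t$ ranging in an interval such that $t(a+b)<1$; this is possible precisely because a third coordinate absorbs $1-t(a+b)$. Homogeneity forces the left side to scale like $t$. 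The right side $w_{ij}\theta(ta,tb)+\lambda t^2ab$ contains a $t^2$ term with positive coefficient $\lambda ab$. Degree-one homogeneity in $t$ on an interval therefore fails, which is a contradiction.

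The main point needing care is that "mean" is interpreted only through homogeneity and dependence on the endpoints; the argument uses nothing else. With $N=2$ the scaling freedom disappears because $\rho_1+\rho_2=1$, and this explains the hypothesis $N\ge3$.
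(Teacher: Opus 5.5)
Your proposal is correct and follows essentially the same route as the paper. The representation comes from the covariance edge decomposition. Off-diagonal entries then pin down each pair mobility, so the first negative claim reduces to nonconstancy of $\lambda\rho_i\rho_j/\theta(\rho_i,\rho_j)$: use equal masses $s$ when $N\ge3$, and for $N=2$ compare the value $\lambda/2$ at equal masses with the vanishing boundary limit. The second negative claim follows because the term $\lambda t^2ab$ breaks degree-one homogeneity on the open pair domain $a+b<1$ when $N\ge3$.
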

\begin{proof}
The covariance identity gives \eqref{eq:effective-mobility}. Off-diagonal matrix entries determine the mobility of each pair. A fixed logarithmic-mean representation would therefore require the rate
\[
 w_{ij}+\lambda\frac{\rho_i\rho_j}{\theta(\rho_i,\rho_j)}
\]
to be independent of $\rho$. For $N\ge3$, choose $\rho_i=\rho_j=a$ and vary $a$ while leaving the remaining masses positive; the added rate is $\lambda a$. For $N=2$, the added factor equals $\lambda/2$ at equal masses and tends to zero at an endpoint. Both contradict constancy.

For $N\ge3$, the admissible pair domain $a,b>0$, $a+b<1$ is open. Whenever $(a,b)$ and $(ta,tb)$ both lie in this domain,
\[
 [w_{ij}\theta(ta,tb)+\lambda t^2ab]
 -t[w_{ij}\theta(a,b)+\lambda ab]=\lambda(t^2-t)ab.
\]
This contradicts degree-one homogeneity for $t\ne1$. The same scaling argument does not apply to the two-state slice $a+b=1$; there the extension $w\theta(a,b)+\lambda ab/(a+b)$ is homogeneous.
\end{proof}

Along a simplex boundary path with $a=\varepsilon\downarrow0$ and $b\to b_0>0$, the added rate satisfies
\[
 \lambda\frac{ab}{\theta(a,b)}
 =\lambda\frac{ab\log(b/a)}{b-a}
 \sim\lambda\varepsilon\log(b_0/\varepsilon).
\]
Its ratio to $\varepsilon$ diverges, so its continuous zero extension is not Lipschitz. For $N=2$ one takes $b=1-\varepsilon$; for $N\ge3$ the other coordinates absorb the mass change. This is the boundary regularity obstruction to directly applying the nonlinear-rate hypotheses of \cite{ErbarFathiSchlichting2020}. The mobility itself is smooth and positive in the interior, as required by the general framework of \cite{LiLu2022} with its zero added-force Hamiltonian choice.

\section{Transport inflow and scalar boundary estimates}\label{sec:estimates}

\subsection{A positive form retained from the pure-transport Hessian}
Write
\[
 n_i=\sum_jw_{ij}\rho_j,\qquad d_i=\sum_jw_{ij},\qquad
 d=\max_i d_i,\qquad \omega=\min_{\{i,j\}\in E}w_{ij}>0.
\]
Define the nonnegative quadratic form
\begin{equation}\label{eq:inflow-form}
 \PP(\rho,\phi)=\frac12\sum_{\{i,j\}\in E}w_{ij}
 [\theta_1(\rho_i,\rho_j)n_i+\theta_2(\rho_i,\rho_j)n_j]
 (\phi_i-\phi_j)^2.
\end{equation}
The crucial point is to retain this form, rather than estimate all transport terms in absolute value.

\begin{lemma}[Retained inflow]\label{lem:inflow}
For every $\rho\in\Pplus$ and $\phi\in\R^N$,
\begin{equation}\label{eq:inflow-bound}
 \B_0\ge\tfrac12\PP-C_0\AW,\qquad
 C_0=2d(1+d/\omega)+d/2.
\end{equation}
In particular, $\kappa_0(G)\ge-C_0$ on every finite connected weighted graph.
\end{lemma}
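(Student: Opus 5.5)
The plan is to split $\B_0$ in \eqref{eq:B0} into an inflow part, a degree part and the Bochner-type term $\phi^T(LW)\phi$. The first two are handled exactly; the third is handled by Young's inequality, with its dangerous cross terms absorbed either by $\AW$ or by $\tfrac12\PP$. The last sentence of the lemma then follows by discarding $\tfrac12\PP\ge0$ and dividing by $\A_0=\AW$.

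\emph{Step 1: the degree part.} Since $(L\rho)_i=d_i\rho_i-n_i$, the second term of \eqref{eq:B0} equals
\[
 \PP-\tfrac12\sum_{\{i,j\}\in E}w_{ij}\bigl[\theta_1 d_i\rho_i+\theta_2 d_j\rho_j\bigr](\phi_i-\phi_j)^2 .
\]
The partial derivatives $\theta_1,\theta_2$ are nonnegative because $\theta$ is increasing. Combining this with the Euler identity \eqref{eq:euler} gives $\theta_1d_i\rho_i+\theta_2d_j\rho_j\le d\,\theta_{ij}$. Hence this piece is at least $\PP-\frac d2\AW$, which accounts for the summand $d/2$ in $C_0$. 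It remains to show
\[
 \phi^T(LW)\phi\ge-\tfrac12\PP-2d(1+d/\omega)\AW .
\]

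\emph{Step 2: expansion of the Bochner term.} Write
\[
 \phi^T(LW)\phi=\sum_i(L\phi)_i(W\phi)_i=\sum_i\sum_{j,k}w_{ij}w_{ik}\theta_{ik}(\phi_i-\phi_j)(\phi_i-\phi_k).
\]
The diagonal terms $j=k$ are nonnegative and are dropped. Each cross term is bounded by Young's inequality. The piece $w_{ij}w_{ik}\theta_{ik}(\phi_i-\phi_k)^2$ sums to at most $d\,\AW$ (times $2$ after symmetrization). The difficulty is the companion piece $w_{ij}w_{ik}\theta_{ik}(\phi_i-\phi_j)^2$. Its weight $\theta_{ik}$ is unrelated to the mobility $\theta_{ij}$ of the edge carrying the gradient, and $\theta_{ij}$ may be arbitrarily small at a multiscale boundary point.

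\emph{Step 3: two scalar facts about $\theta$.}
\begin{itemize}
\item By concavity and degree-one homogeneity, $\theta(y)\le\nabla\theta(x)\cdot y$. Applying this at $y=(1,1)$ gives $\theta_1+\theta_2\ge1$. Moreover the larger partial derivative sits at the smaller endpoint: $\theta_2(a,b)\ge\frac12$ when $b\le a$.
\item $\theta_{ik}\le\max\{\rho_i,\rho_k\}$.
\end{itemize}

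\emph{Step 4: absorbing $\theta_{ik}(\phi_i-\phi_j)^2$.} Bound $\theta_{ik}\le\rho_i+\rho_k$ and treat the two masses separately, using the fact that the inflow $n$ sees every neighbour's mass with weight at least $\omega$.
\begin{itemize}
\item The $\rho_i$ part. If $\rho_j\le\rho_i$, then $\theta_2(\rho_i,\rho_j)n_j\ge\frac12\omega\rho_i$, so $\PP$ controls $\rho_i(\phi_i-\phi_j)^2$ with constant $1/\omega$. If $\rho_j>\rho_i$, then $\rho_i<\theta_{ij}$ and the term goes into $\AW$.
\item The $\rho_k$ part. Here $n_i\ge\omega\rho_k$. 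I would split according to whether $\rho_k$ is comparable to $\max\{\rho_i,\rho_j\}$, in which case it is again dominated by $\theta_{ij}$ or by the $\theta_2$-weighted inflow. If it is not comparable, I would first rebalance the Young weights, putting a factor $\varepsilon=\omega/(4d)$ on the $(\phi_i-\phi_j)^2$ side and $\varepsilon^{-1}$ on the $(\phi_i-\phi_k)^2$ side, where $\theta_{ik}$ is exactly the right mobility. This rebalancing produces the factor $d/\omega$ in $C_0$.
\end{itemize}
Choosing the Young weights so that the total $\PP$-cost is at most $\tfrac12\PP$ should give the constant $2d(1+d/\omega)$.

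The main obstacle is this $\rho_k$ case when $\rho_i\gg\rho_j$. There $\theta_1(\rho_i,\rho_j)\approx1/\log(\rho_i/\rho_j)$ is small, so the inflow $n_i$ is weakly weighted in $\PP$. My first attempt is the rebalanced Young inequality above. If that fails to close, I would transfer the term to the edge $\{i,k\}$ via the triangle inequality $(\phi_i-\phi_j)^2\le2(\phi_i-\phi_k)^2+2(\phi_k-\phi_j)^2$. This reduces it to the previous cases.
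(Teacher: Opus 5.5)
There is a genuine gap, and it sits exactly at the case you flag as the main obstacle. Your Step 1, the expansion in Step 2, and the overall plan agree with the paper. That plan is Young's inequality, with the $(\phi_i-\phi_k)^2$ side sent to $\AW$ and the companion side sent to $\PP$. None of your proposed remedies for the companion piece works.

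\emph{The split is fatal.} The bound $\theta_{ik}\le\rho_i+\rho_k$ already loses too much for the $\rho_k$ part. Take the path $k\sim i\sim j$ with unit weights, $\rho_i=\epsilon$, $\rho_j=\epsilon^2$, $\rho_k=1-\epsilon-\epsilon^2$, and $v=\log(1/\epsilon)$. Then
\[
 \theta_1(\rho_i,\rho_j)n_i\approx 1/v,\qquad \theta_2(\rho_i,\rho_j)n_j\approx 1/v^2,\qquad \theta_{ij}\approx\epsilon/v .
\]
So for $\phi_k=\phi_i\ne\phi_j$, the quantity $\rho_k(\phi_i-\phi_j)^2$ exceeds $\AW+\PP$ by a factor of order $v$. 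No choice of constants absorbs it, even though the true weight $\theta_{ik}\approx 1/v$ is harmless.

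\emph{Rebalancing does not help.} Changing the Young weights by a fixed $\varepsilon$ only rescales the companion coefficient. You would still need
\[
 \theta_{ik}\le C\bigl(\theta_{ij}+\theta_1(\rho_i,\rho_j)n_i+\theta_2(\rho_i,\rho_j)n_j\bigr),
\]
and you never establish this comparison.

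\emph{The triangle-inequality transfer fails.} It produces $\theta_{ik}(\phi_k-\phi_j)^2$, where $j,k$ are two neighbours of $i$ that need not be adjacent. That term is controlled by neither $\AW$ nor $\PP$, and routing it back through $i$ is circular.

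The missing idea is already in your Step 3. Apply $\theta(y)\le\nabla\theta(x)\cdot y$ at $x=(\rho_i,\rho_j)$ and $y=(\rho_k,\rho_i)$, not only at $y=(1,1)$. This gives, for every $k$ and with no case distinction,
\[
 \theta(\rho_i,\rho_k)\le\rho_k\theta_1(\rho_i,\rho_j)+\rho_i\theta_2(\rho_i,\rho_j).
\]
Summing with weights $w_{ik}$ gives $m_i:=\sum_kw_{ik}\theta_{ik}\le\theta_1n_i+d_i\rho_i\theta_2$. Since $n_j\ge\omega\rho_i$, the last term is at most $(d/\omega)\theta_2n_j$. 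Together with the symmetric bound for $m_j$,
\[
 m_i+m_j\le(1+d/\omega)\bigl(\theta_1(\rho_i,\rho_j)n_i+\theta_2(\rho_i,\rho_j)n_j\bigr).
\]
Thus the entire companion piece is dominated by $\PP$. Young's inequality with weight $M$ on the $(\phi_i-\phi_k)^2$ side then yields $|\phi^T(LW)\phi|\le Md\AW+(1+d/\omega)M^{-1}\PP$. The choice $M=2(1+d/\omega)$ gives exactly $C_0$. This is the paper's argument. It also shows that the factor $d/\omega$ comes from $n_j\ge\omega\rho_i$, not from rebalancing the Young weights.
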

\begin{proof}
Concavity and Euler's identity in \eqref{eq:euler} imply
\[
 \theta(c,a)\le c\theta_1(a,b)+a\theta_2(a,b),\qquad a,b,c>0.
\]
Let $m_i=\sum_kw_{ik}\theta(\rho_i,\rho_k)$. For an edge $\{i,j\}$, put $a=\rho_i$, $b=\rho_j$. Applying the displayed inequality at $(a,b)$, and its exchanged form, gives
\[
 m_i\le\theta_1n_i+d_i a\theta_2,\qquad
 m_j\le\theta_2n_j+d_j b\theta_1.
\]
Since $n_i\ge\omega b$ and $n_j\ge\omega a$,
\begin{equation}\label{eq:mi-bound}
 m_i+m_j\le(1+d/\omega)(\theta_1n_i+\theta_2n_j).
\end{equation}
Now
\[
 |\phi^T(LW)\phi|
 \le\sum_i\sum_{j,k}w_{ij}w_{ik}\theta(\rho_i,\rho_k)
 |\phi_i-\phi_j|\,|\phi_i-\phi_k|.
\]
Use $|xy|\le Mx^2/2+y^2/(2M)$ with $x=\phi_i-\phi_k$ and $y=\phi_i-\phi_j$. The first sum is at most $Md\AW$. The second is
\[
 \frac1{2M}\sum_{\{i,j\}\in E}w_{ij}(m_i+m_j)(\phi_i-\phi_j)^2
 \le\frac{1+d/\omega}{M}\PP
\]
by \eqref{eq:mi-bound}. Thus
\begin{equation}\label{eq:young-inflow}
 |\phi^T(LW)\phi|\le Md\AW+\frac{1+d/\omega}{M}\PP.
\end{equation}
In \eqref{eq:B0}, substitute $(L\rho)_i=d_i\rho_i-n_i$. Positivity of the endpoint derivatives and Euler's identity give
\[
 \B_0\ge\phi^T(LW)\phi+\PP-\tfrac d2\AW.
\]
Taking $M=2(1+d/\omega)$ in \eqref{eq:young-inflow} proves \eqref{eq:inflow-bound}.
\end{proof}

\begin{remark}[Comparison with fixed-chain bounds]\label{rem:fixed-chain}
Let $Q=I-L/d$ be the uniform-reversible Markov matrix associated with the same graph, where $d=\max_i d_i$. Then $Q_{\min}=\omega/d$ on the nonzero off-diagonal transitions, and the pure-transport Onsager operator for $Q$ is $W/d$. The universal finite-dimensional estimate of Kamtue et al.\ \cite[Theorem~1.3]{KamtueEtAl2025} is $\mathrm{CD}_{\rm ent}(-K,\mathcal N)$ with
\[
 K=\frac12+\frac{4}{Q_{\min}}\left(1+\frac4{\mathcal N^2}\right).
\]
Their \cite[Theorem~4.2]{KamtueEtAl2025} improves the infinite-dimensional bound to $K=\frac12+2/Q_{\min}$. Scaling back to our generator gives
\[
 \kappa_0(G)\ge-\left(\frac d2+\frac{2d^2}{\omega}\right).
\]
The coarser constant $C_0=\frac{5d}{2}+\frac{2d^2}{\omega}$ in \cref{lem:inflow} is not offered as an improvement of the fixed-chain lower bound. Its role is to retain the nonnegative form $\PP$, which controls the additional boundary terms when $\lambda>0$. The cited fixed-chain results do not directly bound the Hessian of the state-dependent mixed operator $K_\lambda$.
\end{remark}

\subsection{A common entropy-weighted control of the negative terms}
Let
\[
 \zeta(x)=x(1-\log x)\quad(0<x\le1),\qquad \zeta(0)=0,
\]
and define
\[
 \eta_i=\zeta(\rho_i)+\sum_jw_{ij}\zeta(\rho_j),\qquad
 \Q_\eta(\rho,\phi)=\sum_i\eta_i\phi_i^2.
\]
Unlike $\A_\lambda$ and $\PP$, the form $\Q_\eta$ is not invariant under adding constants to $\phi$. Unless another gauge is expressly used, it is evaluated with $\rho^T\phi=0$. The elementary estimates
\begin{equation}\label{eq:eta-bounds}
 \eta_i\ge\rho_i,\qquad \eta_i\le K_\eta:=1+d,\qquad
 \sum_i\eta_i\le(1+d)(1+\log N)
\end{equation}
follow from $\zeta\le1$ and $-\sum_i\rho_i\log\rho_i\le\log N$.

\begin{lemma}[Entropy-weighted estimates]\label{lem:entropy-weights}
In the gauge $\rho^T\phi=0$, let $c_1=\max\{d,1\}+1/2$ and $C_\lambda=\lambda c_1+\lambda^2/2$. Then
\[
 \B_1\ge-c_1\Q_\eta,\qquad \B_2\ge-\tfrac12\Q_\eta,
\]
and hence
\begin{equation}\label{eq:master-lower}
 \B_\lambda\ge\tfrac12\PP-C_0\AW-C_\lambda\Q_\eta.
\end{equation}
With $D_G=\max\{d,1\}$, one may take
\[
 C=\frac{7+\log N}{2}D_G+2+\frac{\log N}{2}
\]
in the bound
\begin{equation}\label{eq:absolute-controls}
 |\B_1|+|\B_2|+\AC\le C\Q_\eta
\end{equation}
for all positive probabilities and all centered potentials.
\end{lemma}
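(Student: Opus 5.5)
The plan is to bound each of the explicit formulas \eqref{eq:B1} and \eqref{eq:B2} term by term, working throughout in the gauge $\rho^T\phi=0$, so that $\psi=\phi$. Every term is then an edge quantity $(\phi_i-\phi_j)^2$ or a vertex quantity $\phi_i^2$, multiplied by a coefficient built from $\rho$, $\theta_{ij}$ and $h$. The guiding principle is that each coefficient is controlled by $\zeta$-type quantities. The scalar facts I would use are:
\begin{itemize}
\item[(i)] $x\,|\log(Nx)|$ and $x(1+|\log x|)$ are bounded, and $\rho_i(-\log\rho_i)\le\zeta(\rho_i)$;
\item[(ii)] $\theta(a,b)\le\frac{a+b}{2}$;
\item[(iii)] $\theta(a,b)\max\{0,-\log a-\log b\}\le C(\zeta(a)+\zeta(b))$.
\end{itemize}
Fact (iii) should follow from the integral representation, $\theta\le\max(a,b)$ and $\theta\le\sqrt{\cdot}$-type bounds. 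For $a\le b$ one has $-\theta\log a\le$ something like $\zeta(b)$-controlled after using $\theta\log(b/a)=b-a$; this is where arbitrary endpoint ratios enter.

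Edge terms are converted to vertex terms by $(\phi_i-\phi_j)^2\le2\phi_i^2+2\phi_j^2$. An edge coefficient bounded by $\zeta(\rho_i)+\zeta(\rho_j)$ then contributes at most about $2\sum_i\phi_i^2\sum_jw_{ij}(\zeta(\rho_i)+\zeta(\rho_j))\lesssim d\,\Q_\eta$. This is the reason for the neighbor sum in $\eta_i$.

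For $\B_2$, the negative part of $\rho_i(1+\frac{h_i-H}{2})$ comes only from $\frac{\rho_i\log(N\rho_i)}{2}$ when $\rho_i<1/N$. That part is at least $-\frac12\rho_i(-\log\rho_i)\ge-\frac12\zeta(\rho_i)\ge-\frac12\eta_i$, since $1-\frac H2\ge1-\frac{\log N}{2}$ is handled using $H\le\log N$. Here I would check carefully that $1+\frac{h_i-H}{2}\ge\frac{1+\log\rho_i}{2}+\frac12$, because $-H\ge0$ is false in general but $H\le\log N$ gives $h_i-H\ge\log\rho_i$. This yields $\B_2\ge-\frac12\Q_\eta$.

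For $\B_1$, the vertex term satisfies $\frac12(L\rho)_i\phi_i^2\ge-\frac12 n_i\phi_i^2\ge-\frac d2\cdot(\text{neighbor }\zeta)\,\phi_i^2$, since $\rho_j\le\zeta(\rho_j)$. The edge term is nonnegative except for $\frac{\theta_{ij}}{4}(h_i+h_j-2H)$ when it is negative. Using $h_i-H\ge\log\rho_i$ and (iii), this is at least $-\frac{\theta_{ij}}4(-\log\rho_i-\log\rho_j)_+$, hence at least $-c(\zeta_i+\zeta_j)$. Summing gives $-c_1\Q_\eta$.

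Combining these bounds with \cref{lem:inflow} yields \eqref{eq:master-lower}.

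For \eqref{eq:absolute-controls}, the same term-by-term estimates apply in absolute value:
\begin{itemize}
\item $\AC=\sum\rho_i\phi_i^2\le\Q_\eta$;
\item $|\rho_i(h_i-H)|\le\rho_i|\log\rho_i|+\rho_i\log N\le(1+\log N)\eta_i$, using $0\le H\le\log N$;
\item $\theta_{ij}|h_i+h_j-2H+6|$ is bounded via (ii) and (iii) by $(\zeta_i+\zeta_j)(\text{const}+\log N)$.
\end{itemize}
Tracking the constants produces the stated $C$.

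The main obstacle is the scalar estimate (iii), uniformly in the ratio $a/b$, since $\theta$ can be much larger than $\min(a,b)$. I would prove it by treating the cases $a\le b$ and $b\le a$ separately, writing $\theta\cdot(-\log a)=\theta\log(b/a)+\theta(-\log b)=(b-a)+\theta(-\log b)$, and then bounding $\theta(-\log b)\le b(-\log b)$ when $b\le1$, using $\theta\le b$.
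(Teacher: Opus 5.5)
Your proposal is correct and follows the paper's proof. Your scalar fact (iii) is exactly \eqref{eq:log-weight}, proved by the same splitting $\theta(a,b)(-\log a-\log b)=(b-a)+2\theta(a,b)(-\log b)\le 2\zeta(b)$ for $a\le b\le 1$, and the rest is the same termwise bookkeeping with $h_i-H\ge\log\rho_i$, $\theta_{ij}\le(\rho_i+\rho_j)/2$ and $(\phi_i-\phi_j)^2\le2\phi_i^2+2\phi_j^2$. To land exactly on the stated constants, bound the vertex term by $n_i\le\sum_jw_{ij}\zeta(\rho_j)\le\eta_i$ with no extra factor $d$. In $|\B_2|$, use $\rho_i\bigl(1+\tfrac12|\log\rho_i|\bigr)\le\zeta(\rho_i)$; your bullet $|\rho_i(h_i-H)|\le(1+\log N)\eta_i$, combined with $\rho_i\le\eta_i$, gives $\tfrac32+\tfrac{\log N}2$ instead of the $1+\tfrac{\log N}2$ that the stated $C$ requires.
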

\begin{proof}
For $1\ge a\ge b>0$, write $a=e^{-u}$ and $b=ae^{-v}$, with $u,v\ge0$. The value at $v=0$ is understood by continuity. Then
\begin{equation}\label{eq:log-weight}
 \theta(a,b)(-\log a-\log b)
 =a(1-e^{-v})(1+2u/v)
 \le a+2au\le2[\zeta(a)+\zeta(b)].
\end{equation}
Because $H\le\log N$, one has
$h_i+h_j-2H\ge\log\rho_i+\log\rho_j$ and $h_i-H\ge\log\rho_i$.
Dropping the positive terms in \eqref{eq:B1}, using $(L\rho)_i\ge-n_i$, and applying \eqref{eq:log-weight} and $(\phi_i-\phi_j)^2\le2\phi_i^2+2\phi_j^2$, we obtain
\[
 \B_1\ge-\sum_i\left[d_i\zeta(\rho_i)+\sum_jw_{ij}\zeta(\rho_j)+\frac{n_i}{2}\right]\phi_i^2
 \ge-c_1\Q_\eta.
\]
Here $n_i\le\sum_jw_{ij}\zeta(\rho_j)\le\eta_i$. Formula \eqref{eq:B2} similarly gives
\[
 \B_2\ge-\frac12\sum_i\rho_i(-\log\rho_i)\phi_i^2\ge-\tfrac12\Q_\eta.
\]
Combining these inequalities with \cref{lem:inflow} proves \eqref{eq:master-lower}.

For \eqref{eq:absolute-controls}, use $|h_i-H|\le-\log\rho_i+\log N$ and $|(L\rho)_i|\le d_i\rho_i+n_i$. If $E_{ij}$ denotes the bracket in \eqref{eq:B1}, then \eqref{eq:log-weight} and $\theta_{ij}\le(\rho_i+\rho_j)/2$ give
\[
 |E_{ij}|\le\left(\frac32+\frac{\log N}{4}\right)
       [\zeta(\rho_i)+\zeta(\rho_j)].
\]
Thus the edge terms contribute at most
$(3+\frac12\log N)D_G\Q_\eta$, and the vertex term contributes at most $D_G\Q_\eta/2$. Hence
\[
 |\B_1|\le\frac{7+\log N}{2}D_G\Q_\eta.
\]
Likewise \eqref{eq:B2} gives
$|\B_2|\le(1+\frac12\log N)\Q_\eta$, and $\AC\le\Q_\eta$.
Adding these estimates proves the stated constant.
\end{proof}

\subsection{Scalar estimates with no ratio restrictions}
The following two lemmas prevent loss of control when different probabilities vanish at unrelated rates.

\begin{lemma}[Derivative balance]\label{lem:scalar-one}
For $0<c\le e^{-2}$ and arbitrary $a,b>0$,
\[
 \theta_1(a,b)+c\theta_2(a,b)\ge\delta(c),\qquad
 \delta(c)=\frac1{4e(1+|\log c|)}.
\]
The coefficient $c$ is independent of the two endpoints.
\end{lemma}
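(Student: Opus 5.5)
The plan is to reduce the inequality to a one-variable estimate using homogeneity. Since $\theta$ is homogeneous of degree one, its partial derivatives $\theta_1,\theta_2$ are homogeneous of degree zero, so they depend only on $x=\log(a/b)\in\R$. Differentiating the integral representation \eqref{eq:logmean} under the integral sign gives
\[
 \theta_1(a,b)=\int_0^1 t\,e^{-(1-t)x}\dd t,\qquad
 \theta_2(a,b)=\int_0^1(1-t)\,e^{tx}\dd t .
\]
Both integrands are positive. Therefore it suffices to show that, for each $x$, one of the two terms $\theta_1$ and $c\theta_2$ alone is at least $\delta(c)$. Put $L=1+|\log c|$; since $c\le e^{-2}$, we have $L\ge3$.

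\emph{Case $x\le0$.} Here $e^{-(1-t)x}\ge1$, so $\theta_1\ge\int_0^1t\dd t=\tfrac12\ge\delta(c)$.

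\emph{Case $0<x\le 2L$ (a moderate range).} Restrict the $\theta_1$ integral to the window $t\in[1-1/x,1]$ when $x\ge1$. On this window $e^{-(1-t)x}\ge e^{-1}$, and $t\ge1-1/x\ge1/2$ once $x\ge2$. This gives $\theta_1\ge 1/(2ex)$. For $0<x\le2$ one can instead bound crudely, $\theta_1\ge e^{-x}/2\ge e^{-2}/2$. Hence $\theta_1\gtrsim 1/(eL)$ throughout this range, with the constant to be adjusted. The window can be tuned, for example to $t\in[1-1/(2x),1]$, so as to land exactly on $1/(4eL)$.

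\emph{Case $x>2L$ (a large range).} Here $\theta_1$ decays like $1/x$, so the $c\theta_2$ term must take over. Restrict the $\theta_2$ integral to $t\in[1-1/x,1]$, where $1-t\le 1/x$ makes the weight small. A cleaner choice is $t\in[1-2/x,\,1-1/x]$: there $1-t\ge1/x$ and $e^{tx}\ge e^{x-2}$, so $\theta_2\ge e^{x-2}/x^2$. Then $c\theta_2\ge e^{x-2-|\log c|}/x^2=e^{x-1-L}/x^2$. For $x\ge2L$ the exponent is at least $x/2-1$, and $e^{x/2-1}/x^2$ is bounded below by a positive absolute constant. Since $L\ge3$, that constant dominates $1/(4eL)$. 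Alternatively, the split point between the moderate and large ranges can be moved to balance the constants.

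The main obstacle is bookkeeping of constants at the transition between the two ranges, so that the final bound is exactly $\delta(c)=1/(4e(1+|\log c|))$ rather than merely of this order. The hypothesis $c\le e^{-2}$ is what makes $L\ge3$, which keeps the windows nondegenerate and lets the absolute constant from the large range dominate. I would first fix the split at $x=2L$ and then check the elementary one-variable inequalities, adjusting the window widths if the constant $4e$ is not reached directly.
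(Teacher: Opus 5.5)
Your proposal is correct and follows essentially the paper's argument: $\theta_1\ge\tfrac12$ when $a\le b$, a windowed lower bound of order $1/(2ex)$ on $\theta_1$ up to roughly twice $|\log c|$, and exponential growth of $c\theta_2$ beyond that. The paper handles the last range via the closed form $(e^v-1-v)/v^2$ and $e^{L}\ge L^2$, where you use a second window. No tuning of the windows is needed: with your split at $x=2(1+|\log c|)$, the bound $\theta_1\ge1/(2ex)$ lands exactly on $\delta(c)$. The crude bound $e^{-2}/2>1/(12e)\ge\delta(c)$ covers $0<x\le2$, and $e^{x/2-1}/x^2\ge e/16>\delta(c)$ covers the large range.
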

\begin{proof}
If $a\le b$, differentiation under the integral in \eqref{eq:logmean} gives $\theta_1(a,b)\ge1/2$. If $a>b$, set $v=\log(a/b)>0$. By homogeneity,
\[
 \theta_1(a,b)=\frac{v-1+e^{-v}}{v^2}
 =\int_0^1 t e^{-(1-t)v}\dd t,
 \qquad
 \theta_2(a,b)=\frac{e^v-1-v}{v^2}.
\]
For $v\le1$, the integral is at least $1/(2e)$. For $v\ge1$, integrating over $[1-(1+v)^{-1},1]$ gives
\[
 \theta_1(a,b)\ge\frac1{2e(1+v)}.
\]
Put $L=|\log c|\ge2$. If $v\le2L$, this last estimate is at least $\delta(c)$. If $v>2L\ge4$, then $e^v-1-v\ge e^v/2$ and $e^v/v^2$ is increasing. Hence
\[
 c\theta_2(a,b)\ge\frac{ce^v}{2v^2}
 \ge\frac1{8cL^2}\ge\frac18>\delta(c),
\]
where $e^L\ge L^2$ for $L\ge2$ was used.
\end{proof}

\begin{lemma}[Uniform entropy-weight absorption]\label{lem:scalar-two}
For $0<a\le e^{-2}$ and $0<b\le1$, set
\[
 D_*(a,b)=\theta_1(a,b)+a\theta_2(a,b),\qquad L_a=-\log a,
\]
and
\[
 V(a)=\frac{2a}{\delta(a)}+2(1+e^2)a+2aL_a^2.
\]
Then
\begin{equation}\label{eq:scalar-two}
 \frac{\zeta(b)}{D_*(a,b)}\le(1+L_a)V(a)\longrightarrow0
 \quad\text{as }a\downarrow0,
\end{equation}
uniformly for $b\in(0,1]$.
\end{lemma}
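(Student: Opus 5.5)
The plan is to bound $D_*(a,b)$ from below in three regimes of $b$ relative to $a$. In each regime the numerator $\zeta(b)$ is compared with one of the three summands of $V(a)$, and the common factor $(1+L_a)$ comes from the elementary bound on $\zeta$. Two facts are used throughout. First, $\zeta$ is increasing on $(0,1]$, since $\zeta'(x)=-\log x\ge0$. Second, whenever $b\ge a$ we have $\log(1/b)\le L_a$, and hence $\zeta(b)\le b(1+L_a)$. Both $\theta_1$ and $\theta_2$ are positive, so $D_*\ge\theta_1(a,b)$, and also $D_*\ge\delta(a)$ by \cref{lem:scalar-one} applied with $c=a\le e^{-2}$.

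\emph{Case $b\le a$.} Monotonicity of $\zeta$ gives $\zeta(b)\le\zeta(a)=a(1+L_a)$. Combined with $D_*\ge\delta(a)$, this yields $\zeta(b)/D_*\le(1+L_a)\,a/\delta(a)$, which is at most $(1+L_a)V(a)$ because of the term $2a/\delta(a)$.

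\emph{Case $a<b\le e^2a$.} Since $a\le b$, differentiating under the integral in \eqref{eq:logmean} gives $\theta_1(a,b)\ge\frac12$, as in the proof of \cref{lem:scalar-one}. Therefore
\[
 \frac{\zeta(b)}{D_*}\le 2b(1+L_a)\le 2e^2a(1+L_a),
\]
which is absorbed by the term $2(1+e^2)a$ in $V(a)$.

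\emph{Case $b>e^2a$.} Put $v=\log(b/a)>2$; the constraint $b\le1$ gives $v\le L_a$. By the symmetry of $\theta$ and the formula from \cref{lem:scalar-one}, $\theta_1(a,b)=\theta_2(b,a)=(e^v-1-v)/v^2$. For $v\ge2$ one checks that $e^v-1-v\ge e^v/2$; this holds at $v=2$ and the difference $e^v/2-1-v$ is increasing thereafter. Writing $b=ae^v$, we get
\[
 \frac{\zeta(b)}{D_*}\le\frac{(1+L_a)\,ae^v\,v^2}{e^v/2}=2(1+L_a)\,a v^2\le 2(1+L_a)\,aL_a^2.
\]
This is exactly the last term of $V(a)$.

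Together the three cases give \eqref{eq:scalar-two}, uniformly in $b\in(0,1]$. For the limit, substitute $\delta(a)=1/(4e(1+L_a))$. Then $(1+L_a)V(a)$ is a sum of terms of the form $a(1+L_a)^k$ with $k\le3$, and each tends to $0$ as $a\downarrow0$.

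The only genuinely delicate step is the large-ratio case. There $\theta_1$ alone must supply a factor of order $b/(a v^2)$ to beat $\zeta(b)$, and we rely on the exponential growth of $\theta_2(b,a)$ rather than on \cref{lem:scalar-one}. The other cases are routine once the split at $b=a$ and $b=e^2a$ is fixed.
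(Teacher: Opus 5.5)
Your proof is correct and follows essentially the paper's argument. You use the same three regimes $b\le a$, $a<b\le e^2a$, $b>e^2a$, with the same lower bounds on $D_*$: $\delta(a)$ from \cref{lem:scalar-one}, $\theta_1\ge\tfrac12$, and $\theta_1=(e^v-1-v)/v^2$. The only difference is bookkeeping. The paper first bounds $(a+b)/D_*\le V(a)$ and then treats $b(-\log b)/D_*$ separately before adding. You instead bound $\zeta(b)$ directly, using $\zeta(b)\le b(1+L_a)$ for $b\ge a$ and $\zeta(b)\le\zeta(a)$ for $b\le a$, which is marginally tighter.
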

\begin{proof}
First we prove $(a+b)/D_*\le V(a)$. If $b\le a$, \cref{lem:scalar-one} gives the first term $2a/\delta(a)$. If $a\le b\le e^2a$, then $\theta_1(a,b)\ge1/2$, giving the second term. In the remaining case, put $v=\log(b/a)\ge2$, so $v\le L_a$. The derivative at the smaller endpoint satisfies
\[
 \theta_1(a,b)=\frac{e^v-1-v}{v^2}\ge\frac{e^v+1}{2v^2},
\]
because $e^v\ge3+2v$ for $v\ge2$. Therefore $(a+b)/D_*\le2av^2\le2aL_a^2$.

If $b\ge a$, then $b(-\log b)/D_*\le L_aV(a)$. If $b\le a\le e^{-2}$, the monotonicity of $x(-\log x)$ on $(0,e^{-1}]$ yields
\[
 \frac{b(-\log b)}{D_*}\le\frac{aL_a}{\delta(a)}\le L_aV(a).
\]
Adding the bound for $b/D_*$ proves \eqref{eq:scalar-two}. Its limit is zero because $V(a)$ is bounded by a constant times $a(1+L_a)^2$.
\end{proof}

\section{Boundary supports and the diameter criterion}\label{sec:boundary}

\subsection{Three-layer absorption}
\begin{lemma}[Local absorption]\label{lem:local-absorption}
Fix $\lambda>0$ and $\rho^*\in\Pbar$. Suppose that $S=\supp\rho^*$ reaches every vertex in at most two steps. For every $\varepsilon>0$, there exist a relative neighborhood $U_\varepsilon$ of $\rho^*$ and a finite constant $C_\varepsilon$ such that
\begin{equation}\label{eq:local-absorption}
 \Q_\eta(\rho,\phi)\le\varepsilon\PP(\rho,\phi)
 +C_\varepsilon\A_\lambda(\rho,\phi)
\end{equation}
for $\rho\in U_\varepsilon\cap\Pplus$ and every potential satisfying $\rho^T\phi=0$.
\end{lemma}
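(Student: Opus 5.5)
The plan is to decompose $V$ by distance to the support and absorb $\Q_\eta$ layer by layer, always in the gauge $\rho^T\phi=0$. Put $S_0=S$, $S_1=\{v:\dist(v,S)=1\}$ and $S_2=\{v:\dist(v,S)=2\}$. By hypothesis $V=S_0\cup S_1\cup S_2$. Let $m=\min_{s\in S}\rho^*_s>0$. The neighborhood $U_\varepsilon$ will be a set on which $\rho_s\ge m/2$ for $s\in S$ and $\rho_v\le\tau$ for $v\notin S$. Here $\tau\le e^{-2}$ is a small parameter depending on $\varepsilon$, chosen at the end.

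\emph{Layer $S_0$.} For $s\in S$ we have $\eta_s\le K_\eta$ and $\rho_s\ge m/2$, so
\[
\eta_s\phi_s^2\le (2K_\eta/m)\,\rho_s\phi_s^2 .
\]
Summing over $s$ bounds these terms by $(2K_\eta/m)\AC\le(2K_\eta/(\lambda m))\A_\lambda$. This uses $\lambda>0$ and the gauge $\AC=\sum\rho_i\phi_i^2$. This is the only place where the Fisher term is needed.

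\emph{Layer $S_1$.} For $j\in S_1$, fix a neighbour $s\in S$. Then $n_j\ge\omega\rho_s\ge\omega m/2$, so the $\PP$-coefficient of the edge $\{j,s\}$ is at least $\tfrac14 w_{js}\omega m\,\theta_1(\rho_j,\rho_s)$. Since $\rho_j\le\tau$ and $\rho_s\ge m/2$, the estimate in the proof of \cref{lem:scalar-two} (derivative at the smaller endpoint, $\theta_1\ge(e^v+1)/(2v^2)$ with $v=\log(\rho_s/\rho_j)$) shows that $\theta_1(\rho_j,\rho_s)\to\infty$ as $\tau\to0$. We write
\[
\eta_j\phi_j^2\le 2\eta_j\phi_s^2+2\eta_j(\phi_j-\phi_s)^2 .
\]
The first term is absorbed as in layer $S_0$. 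The second is at most $\varepsilon'\PP$ once $\tau$ is small, because $\eta_j\le K_\eta$.

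\emph{Layer $S_2$.} This is the main obstacle, since both endpoints of the relevant edges vanish at unrelated rates. For $k\in S_2$, fix $j\in S_1$ adjacent to $k$. On the edge $\{j,k\}$ we have $n_j\ge\omega m/2$ and $n_k\ge\omega\rho_j$. Using $\theta_1(a,b)=\theta_2(b,a)$, the $\PP$-coefficient is therefore at least $c\,D_*(\rho_j,\rho_k)$ with $c=c(\omega,m)$. \Cref{lem:scalar-two} then gives
\[
\zeta(\rho_k)(\phi_k-\phi_j)^2\le c^{-1}(1+L_{\rho_j})V(\rho_j)\cdot c\,D_*(\rho_j,\rho_k)(\phi_k-\phi_j)^2\le\varepsilon'\PP
\]
for small $\tau$, uniformly in $\rho_k$. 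Writing
\[
\zeta(\rho_k)\phi_k^2\le2\zeta(\rho_k)\phi_j^2+2\zeta(\rho_k)(\phi_k-\phi_j)^2
\]
and noting $\zeta(\rho_k)\le1$ reduces the self-weight of $k$ to the $S_1$ layer. Every vertex weight of the form $\zeta(\rho_v)\phi_v^2$ with $v\notin S$ is handled in the same way, through a chosen path to $S$ of length at most two.

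The remaining difficulty is the cross weights $w_{kl}\zeta(\rho_l)\phi_k^2$ in $\eta_k$ with $l\notin S$. I would split according to the order of the two masses. If $\rho_l\le\rho_k$, monotonicity of $\zeta$ on $(0,e^{-1}]$ gives $\zeta(\rho_l)\le\zeta(\rho_k)$, reducing to the self-weight just treated. If $\rho_l>\rho_k$, write
\[
\phi_k^2\le2\phi_l^2+2(\phi_k-\phi_l)^2 .
\]
The term $\zeta(\rho_l)\phi_l^2$ is again a self-weight. For the edge term, I would first try to compare with the $\PP$-coefficient of $\{k,l\}$. That coefficient contains $\theta_1(\rho_k,\rho_l)n_k$, the large derivative at the smaller endpoint. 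When $l\in S_1$ it also contains $\theta_2(\rho_k,\rho_l)n_l$ with $n_l\ge\omega m/2$, in which case \cref{lem:scalar-two} applies directly with $a=\rho_l$, $b=\rho_l$. When $l\in S_2$, I would route the comparison through the $S_1$-neighbour of $k$ or of $l$, using $n_k\ge\omega\rho_j$ together with \cref{lem:scalar-one}. If this chaining fails, a fallback is to enlarge $\PP$-control by ordering the vertices of $S_2$ by mass and always absorbing into the edge whose smaller endpoint has the larger derivative.

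Collecting the three layers, and using the fact that each vertex appears in only finitely many chains with graph-dependent multiplicities, gives \eqref{eq:local-absorption} with $\varepsilon'$ a fixed graph multiple of $\varepsilon$. The constant $C_\varepsilon$ depends on $m$, $\lambda$ and $G$ through the $S_0$ absorption.
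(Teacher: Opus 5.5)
There is a genuine gap, at exactly the step you leave open. Your layers $S_0$ and $S_1$, the self-weights $\zeta(\rho_k)\phi_k^2$ for $k\in S_2$, and the cross weights with $\rho_l\le\rho_k$ are handled correctly. (For $l\in S_1$ with $\rho_l>\rho_k$, what you need is $\zeta(\rho_l)/D_*(\rho_l,\rho_k)\to0$. This follows from \cref{lem:scalar-one} with $c=\rho_l$, not from \cref{lem:scalar-two} with $a=b=\rho_l$.)

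The unresolved case is the cross weight $w_{kl}\zeta(\rho_l)\phi_k^2$ with $k,l\in S_2$ adjacent and $\rho_l>\rho_k$. Comparing $\zeta(\rho_l)(\phi_k-\phi_l)^2$ with the coefficient $p_{kl}$ of the edge $\{k,l\}$ cannot work. Take $\rho_k=\epsilon/2$, $\rho_l=\epsilon$, every vertex of $S_1$ of mass $\epsilon^2$, and the rest of $S_2$ of mass at most $\epsilon$. Then $n_k,n_l=O(\epsilon)$, and both partial derivatives of $\theta$ at $(\rho_k,\rho_l)$ are absolute constants. Hence $p_{kl}=O(\epsilon)$, while $\zeta(\rho_l)\sim\epsilon\log(1/\epsilon)$.

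Routing through the $S_1$-neighbour $j$ of $k$ with the bound $n_k\ge\omega\rho_j$, as you wrote it, fails as well. It only gives $p_{jk}\gtrsim D_*(\rho_j,\rho_k)$, which can be of order $1/|\log\rho_j|$, and $\rho_j$ is unrelated to $\rho_l$. For example, $\rho_j=e^{-1/\epsilon}$, $\rho_k=\rho_j^2$, $\rho_l=\epsilon$ makes the ratio of order $\log(1/\epsilon)$. The ordering ``fallback'' is not an argument.

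The missing idea is to use $n_k$ itself, not $\rho_j$, in the coefficient of the selected edge $\{j,k\}$. Since $n_j\ge\omega m/2$, one has
$p_{jk}\ge c\,[\theta_1(\rho_j,\rho_k)+n_k\theta_2(\rho_j,\rho_k)]$.
Then \cref{lem:scalar-one}, applied with coefficient $n_k\le e^{-2}$, gives $p_{jk}\ge c\,\delta(n_k)$, uniformly in the unrelated ratio $\rho_j/\rho_k$.

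Moreover, $n_k$ dominates every neighbour weight of $k$, since $n_k\ge w_{kl}\rho_l$. In fact, because no neighbour of $k$ lies in $S$, an entropy bound gives $\sum_{l\sim k}w_{kl}\zeta(\rho_l)\le C\,n_k(1-\log n_k)$ with $n_k\to0$. So the entire cross part of $\eta_k$, divided by $p_{jk}$, is $O\bigl(n_k(1+|\log n_k|)^2\bigr)\to0$.

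Combined with your self-weight estimate, the whole of $\eta_k\phi_k^2\le2\eta_k(\phi_k-\phi_j)^2+2K_\eta\phi_j^2$ is then absorbed through the single edge $\{j,k\}$, with no case split on the order of the masses. This is how the paper closes the second layer.
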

\begin{proof}
Partition the vertices into
\[
 S,\qquad T=\{v:\dist(v,S)=1\},\qquad U=\{v:\dist(v,S)=2\}.
\]
For each $t\in T$, choose an adjacent $s(t)\in S$; for each $u\in U$, choose an adjacent $t(u)\in T$. These choices are fixed independently of $\rho$ and $\phi$. Within each layer, the selected edges are distinct; hence their weighted squares can be summed without counting an edge more than once. Choose $m>0$ and shrink the relative neighborhood of $\rho^*$ so that $\rho_s\ge m$ for all $s\in S$ and, whenever the relevant layer is nonempty,
\[
 \max_{t\in T}\rho_t\le\min\{e^{-2},\,me^{-2}\},
 \qquad \max_{u\in U}n_u\le e^{-2}.
\]
These conditions are possible because $\rho_t^*=0$ for $t\in T$, and no neighbor of $u\in U$ lies in $S$, so $n_u\to0$ as $\rho\to\rho^*$. Empty layers impose no condition. We may further shrink the neighborhood as needed in the uniform estimates below. Denote the coefficient of $(\phi_i-\phi_j)^2$ in \eqref{eq:inflow-form} by $p_{ij}$.

\emph{The support layer.} By \eqref{eq:action},
\begin{equation}\label{eq:support-layer}
 \sum_{s\in S}\phi_s^2\le\frac{\A_\lambda}{\lambda m}.
\end{equation}

\emph{The first layer.} For $t\in T$, $n_t\ge\omega m$. At a small endpoint $\rho_t$, with $\rho_{s(t)}\ge m$, the logarithmic-mean derivative formula used in \cref{lem:scalar-one} gives, uniformly on the chosen neighborhood,
\[
 \theta_2(\rho_{s(t)},\rho_t)\ge\frac{m}{2\rho_t|\log\rho_t|^2},\qquad
 p_{s(t),t}\ge\frac{\omega^2m^2}{4\rho_t|\log\rho_t|^2}\longrightarrow\infty.
\]
Indeed $v=\log(\rho_{s(t)}/\rho_t)\ge2$, $v\le|\log\rho_t|$, and $e^v-1-v\ge e^v/2$. Set
$\gamma=\max_{t\in T}p_{s(t),t}^{-1}$, with value zero for an empty layer. Then $\gamma\to0$, and
\begin{equation}\label{eq:first-layer}
 \sum_{t\in T}\phi_t^2
 \le2\gamma\PP+2|T|\sum_{s\in S}\phi_s^2.
\end{equation}

\emph{The second layer.} Every neighbor of $u\in U$ lies outside $S$, so $n_u\to0$. Fix $t=t(u)$. From $n_t\ge\omega m$ and $n_u\ge\omega\rho_t$, there are constants $c,c'>0$, independent of the varying state, such that
\begin{equation}\label{eq:second-coefficient}
 p_{tu}\ge c(\theta_1(\rho_t,\rho_u)+n_u\theta_2(\rho_t,\rho_u)),\qquad
 p_{tu}\ge c'D_*(\rho_t,\rho_u).
\end{equation}
One may take $c=(\omega/2)\min\{\omega m,1\}$ and $c'=(\omega/2)\min\{\omega m,\omega\}$.

All neighbors of $u$, not merely the selected predecessor, contribute to $\eta_u$. Put $a_j=w_{uj}\rho_j$ for $j\sim u$, so $\sum_ja_j=n_u$. The entropy of $(a_j/n_u)_{j\sim u}$ is at most $\log N$. Therefore, for small $n_u$,
\begin{align*}
 \sum_{j\sim u}w_{uj}\zeta(\rho_j)
 &=n_u+\sum_{j\sim u}a_j(-\log a_j)+\sum_{j\sim u}a_j\log w_{uj}\\
 &\le n_u[1+\log N+\log^+(\max_Ew)-\log n_u]
 \le C_{\mathrm{nbr}} n_u(1-\log n_u).
\end{align*}
Here $C_{\mathrm{nbr}}$ is a finite constant depending only on the graph. By the first inequality in \eqref{eq:second-coefficient} and \cref{lem:scalar-one},
\[
 \frac{n_u(1-\log n_u)}{p_{tu}}
 \le\frac{4e}{c}n_u(1+|\log n_u|)^2\longrightarrow0.
\]
By the second inequality and \cref{lem:scalar-two},
\[
 \frac{\zeta(\rho_u)}{p_{tu}}
 \le\frac{(1+|\log\rho_t|)V(\rho_t)}{c'}\longrightarrow0.
\]
Both limits are uniform in all other vanishing coordinates. Thus
\[
 \beta=\max_{u\in U}\frac{\eta_u}{p_{t(u),u}}\longrightarrow0,
\]
with the usual empty-layer convention. Using \eqref{eq:eta-bounds},
\begin{equation}\label{eq:second-layer}
 \sum_{u\in U}\eta_u\phi_u^2
 \le2\beta\PP+2K_\eta|U|\sum_{t\in T}\phi_t^2.
\end{equation}

Combining \eqref{eq:support-layer}, \eqref{eq:first-layer}, and \eqref{eq:second-layer} yields
\begin{align*}
 \Q_\eta&\le[2\beta+2K_\eta(1+2|U|)\gamma]\PP+C_A\A_\lambda,\\
 C_A&=\frac{K_\eta[1+2(1+2|U|)|T|]}{\lambda m}.
\end{align*}
The coefficient of $\PP$ tends to zero. Shrinking the neighborhood proves \eqref{eq:local-absorption}. If $S=V$, \eqref{eq:support-layer} proves the claim directly.
\end{proof}

\subsection{Sufficiency and the compactness argument}
\begin{proof}[Proof of the sufficient directions in \cref{thm:classification}]
For a fixed $\lambda>0$, apply \cref{lem:local-absorption} with $\varepsilon=1/(4C_\lambda)$. The master estimate \eqref{eq:master-lower} gives
\[
 \B_\lambda\ge\tfrac14\PP-C_0\AW-C_\lambda C_\varepsilon\A_\lambda
 \ge-(C_0+C_\lambda C_\varepsilon)\A_\lambda.
\]
This is \eqref{eq:local-bound}.

If $\diam(G)\le2$, every nonempty support satisfies the two-step condition. The corresponding relative neighborhoods cover the compact closed simplex $\Pbar$. A finite subcover and the maximum of the finitely many constants give a global lower bound. The estimates are quadratic-form inequalities valid for all potentials before the cover is taken. Thus compactness is applied only to the state variable and introduces no fixed-potential restriction.
\end{proof}

\subsection{The distance-three obstruction}
\begin{lemma}[An explicit degenerating family]\label{lem:distance-three}
Suppose $\dist(u,v)\ge3$. For $0<\varepsilon<1/N$, set
\[
 a=1-(N-1)\varepsilon,\qquad \rho_u=a,\quad\rho_i=\varepsilon\ (i\ne u),
 \qquad\phi=e_v.
\]
Let $d_v=\sum_jw_{vj}$ and $s_2(v)=\sum_jw_{vj}^2$. Then
\begin{align}
 \A_\lambda&=\varepsilon[d_v+\lambda(1-\varepsilon)],\notag\\
 \B_\lambda&=\varepsilon\left[d_v^2+s_2(v)+2\lambda d_v+\lambda^2(1-\varepsilon)
 +\frac{\lambda a}{2}\log(\varepsilon/a)[d_v+\lambda(1-2\varepsilon)]\right].\label{eq:explicit-family}
\end{align}
For every fixed $\lambda>0$,
\begin{equation}\label{eq:negative-infinity}
 \frac{\B_\lambda}{\A_\lambda}
 =\frac\lambda2\log\varepsilon+(d_v+\lambda)
 +\frac{s_2(v)}{d_v+\lambda}+o(1)\longrightarrow-\infty.
\end{equation}
\end{lemma}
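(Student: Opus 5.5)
The plan is to evaluate every term of \cref{prop:hessian} exactly for this family. The distance hypothesis makes the neighbourhood of $v$ flat, so almost every term either vanishes or is elementary. The first step is to record the local geometry. Since $\dist(u,v)\ge3$, the vertex $v$, each neighbour $j\sim v$, and each neighbour of such a $j$ all differ from $u$. Hence $\rho_i=\varepsilon$ for all of them. Every edge at $v$ then has equal endpoints, so $\theta=\varepsilon$ and $\theta_1=\theta_2=\tfrac12$. Moreover $(L\rho)_v=0$ and $(L\rho)_j=0$ for $j\sim v$, because all of these vertices see only masses $\varepsilon$. The action follows at once from \eqref{eq:action}: $\AW=\varepsilon d_v$ and $\AC=\Var_\rho(e_v)=\varepsilon(1-\varepsilon)$, which gives the formula for $\A_\lambda$.

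Next I compute $\B_0$ from \eqref{eq:B0}. We have $W e_v=\varepsilon\bigl(d_ve_v-\sum_j w_{vj}e_j\bigr)$, so
\[
 e_v^T L W e_v=\sum_j w_{vj}\bigl[(We_v)_v-(We_v)_j\bigr]=\varepsilon\sum_j w_{vj}(d_v+w_{vj})=\varepsilon\bigl(d_v^2+s_2(v)\bigr).
\]
The correction term in \eqref{eq:B0} involves only edges at $v$, where $(L\rho)$ vanishes at both endpoints, so it is zero.

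For $\B_1$ and $\B_2$ I use the gauge $\psi=e_v-\varepsilon\one$. The vertex term of \eqref{eq:B1} is
\[
 \tfrac12\sum_i(L\rho)_i(\varepsilon^2-2\varepsilon\delta_{iv}).
\]
It vanishes exactly, because $\one^TL\rho=0$ and $(L\rho)_v=0$. The edge term of \eqref{eq:B1} lives on the edges at $v$. There the bracket equals $\tfrac{\varepsilon}{2}+\tfrac{\varepsilon}{4}\bigl(2\log(N\varepsilon)-2H+6\bigr)$. The key identity is
\[
 \log(N\varepsilon)-H=a\log(\varepsilon/a),
\]
which comes from $H=a\log(Na)+(1-a)\log(N\varepsilon)$. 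With it, $\B_1=\varepsilon d_v\bigl[2+\tfrac a2\log(\varepsilon/a)\bigr]$.

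For $\B_2$ I split \eqref{eq:B2} into the vertex $v$, whose weight is $\varepsilon(1-\varepsilon)^2$, and the remaining vertices, whose weights are $\rho_i\varepsilon^2$. The constant part sums to $\varepsilon(1-\varepsilon)$. For the logarithmic part I use the same identity at every $\varepsilon$-vertex, together with $\log(Na)-H=(1-a)\log(a/\varepsilon)$ at $u$. After collecting terms this should give $\varepsilon\bigl[\lambda^2(1-\varepsilon)+\tfrac{\lambda^2a}{2}(1-2\varepsilon)\log(\varepsilon/a)\bigr]$ once multiplied by $\lambda^2$. This bookkeeping, namely the sum of $\rho_i\psi_i^2(h_i-H)$ producing the exact factor $a(1-2\varepsilon)$, is the only step I expect to need care; it is routine algebra.

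Adding the three coefficients gives \eqref{eq:explicit-family}. Dividing by $\A_\lambda$ and letting $\varepsilon\to0$, with $a\to1$ and $\varepsilon\log\varepsilon\to0$, gives \eqref{eq:negative-infinity}. The leading term is
\[
 \frac{\lambda}{2}\log\varepsilon\cdot\frac{d_v+\lambda}{d_v+\lambda}=\frac{\lambda}{2}\log\varepsilon,
\]
and the constants are $(2\lambda d_v+\lambda^2+d_v^2+s_2)/(d_v+\lambda)=d_v+\lambda+s_2(v)/(d_v+\lambda)$. Since $\lambda>0$, the ratio tends to $-\infty$.
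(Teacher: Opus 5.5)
Your proposal is correct and matches the paper's proof: you evaluate $\B_0,\B_1,\B_2$ from \cref{prop:hessian} using that $L\rho$ vanishes on the closed neighbourhood of $v$ and that $h_v-H=a\log(\varepsilon/a)$, and you arrive at the same three coefficients that are summed to give \eqref{eq:explicit-family}. One slip: $\psi_i^2=\varepsilon^2+(1-2\varepsilon)\delta_{iv}$, not $\varepsilon^2-2\varepsilon\delta_{iv}$, but the vertex term of $\B_1$ still vanishes because $(L\rho)_v=0$; and your $\B_2$ bookkeeping, which does check out to $\varepsilon(1-2\varepsilon)(h_v-H)$, can be shortened by using $\sum_i\rho_i(h_i-H)=0$.
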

\begin{proof}
The vertex $v$ and all its neighbors have mass $\varepsilon$, and none of those vertices is adjacent to $u$. Hence $L\rho$ vanishes on the closed neighborhood of $v$. In addition,
\[
 h_v-H=a\log(\varepsilon/a).
\]
Using \cref{prop:hessian}, with the centered version of $e_v$ in the reaction terms, gives
\begin{align*}
 \B_0&=\varepsilon[d_v^2+s_2(v)],\\
 \B_1&=\varepsilon d_v[2+(h_v-H)/2],\\
 \B_2&=\varepsilon(1-\varepsilon)
       +\tfrac12\varepsilon(1-2\varepsilon)(h_v-H).
\end{align*}
Together with \eqref{eq:action}, these identities give \eqref{eq:explicit-family}. Dividing and using $a\to1$ and $\varepsilon|\log\varepsilon|\to0$ proves \eqref{eq:negative-infinity}.
\end{proof}

\begin{proof}[Proof of the necessary directions in \cref{thm:classification}]
The global necessity follows from \cref{lem:distance-three}. For the local assertion, suppose that a vertex $v$ has distance at least three from $S=\supp\rho^*$. Let $m_0=|V\setminus S|$, and set
\[
 \rho_s=(1-m_0\varepsilon)\rho_s^*\ (s\in S),\qquad
 \rho_i=\varepsilon\ (i\notin S),\qquad\phi=e_v.
\]
For small $\varepsilon>0$ this is a positive probability converging to $\rho^*$. Again $L\rho$ vanishes on the closed neighborhood of $v$, and the densities at $v$ and all its neighbors equal $\varepsilon$. The masses on $S$ need not be equal. To verify the coefficient identities directly, set $\psi=e_v-\varepsilon\one$. Since $\sum_i(L\rho)_i=0$, $(L\rho)_v=0$, and $\sum_i\rho_i(h_i-H)=0$, we have
\[
 \sum_i(L\rho)_i\psi_i^2=(1-2\varepsilon)(L\rho)_v=0,
 \qquad
 \sum_i\rho_i\psi_i^2(h_i-H)=\varepsilon(1-2\varepsilon)(h_v-H).
\]
The only edges with a nonzero potential difference are incident to $v$, where both endpoint masses equal $\varepsilon$. Consequently, the same formulas for $\B_0,\B_1,\B_2$ hold with
\[
 h_v-H=\log\varepsilon-\sum_i\rho_i\log\rho_i=\log\varepsilon+O(1).
\]
Therefore $\B_\lambda/\A_\lambda=(\lambda/2)\log\varepsilon+O(1)\to-\infty$. No neighborhood can satisfy \eqref{eq:local-bound}.
\end{proof}

\begin{corollary}[Singular change at zero reaction]\label{cor:singular}
If $\diam(G)\ge3$, then $\kappa_0(G)>-\infty$, whereas $\kappa_\lambda(G)=-\infty$ for every $\lambda>0$. In particular, every weighted hypercube $Q_n$ with $n\ge3$ has this property.
\end{corollary}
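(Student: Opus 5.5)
The corollary combines two results already proved, and I would handle its two assertions separately.

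For the first assertion, \cref{lem:inflow} gives $\B_0\ge\tfrac12\PP-C_0\AW$ on every finite connected weighted graph, with no diameter hypothesis. Since $\PP\ge0$ and $\A_0=\AW$, this yields $\kappa_0(G)\ge-C_0>-\infty$.

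For the second assertion, when $\diam(G)\ge3$ I would choose vertices $u,v$ with $\dist(u,v)\ge3$ and use the explicit family of \cref{lem:distance-three}. By \eqref{eq:negative-infinity}, $\B_\lambda/\A_\lambda\to-\infty$ as $\varepsilon\downarrow0$ for every fixed $\lambda>0$. Hence $\kappa_\lambda(G)=-\infty$, which is also part~(2) of \cref{thm:classification}. The contrast between the two statements is the singular change at $\lambda=0$. It is visible in the $\frac{\lambda}{2}\log\varepsilon$ term of \eqref{eq:negative-infinity}, which disappears exactly when $\lambda=0$.

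For the hypercube example, I would observe that $Q_n$ has vertex set $\{0,1\}^n$, with edges between strings at Hamming distance one. Its graph distance is the Hamming distance, so $\diam(Q_n)=n$. For $n\ge3$ the antipodal vertices $0\cdots0$ and $1\cdots1$ are at distance $n\ge3$. Graph distances ignore the magnitudes of the positive weights, so this holds for any positive symmetric weights. The general statement then applies.

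No step is a real obstacle. The one point to check is that the weights do not affect the diameter, and the model's convention on distances settles this.
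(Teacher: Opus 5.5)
Your proposal is correct and matches the paper's proof: \cref{lem:inflow} gives $\kappa_0(G)\ge-C_0$ on every finite connected graph, and \cref{thm:classification} (via the distance-three family of \cref{lem:distance-three}) gives $\kappa_\lambda(G)=-\infty$ for $\lambda>0$. The hypercube case then follows from $\diam(Q_n)=n$, which does not depend on the positive weights.
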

\begin{proof}
Apply \cref{lem:inflow,thm:classification}; the graph diameter of $Q_n$ is $n$.
\end{proof}

\section{Parameter stability, including the zero endpoint}\label{sec:parameter}

A direct use of \cref{lem:local-absorption} would introduce $1/\lambda$ through \eqref{eq:support-layer}. To include $\lambda=0$, the support layer must instead be anchored by transport paths.

\begin{lemma}[Transport-only absorption]\label{lem:transport-absorption}
Assume $\diam(G)\le2$. For every $\varepsilon>0$, there is a finite constant $C_\varepsilon$ such that
\begin{equation}\label{eq:transport-absorption}
 \Q_\eta(\rho,\phi)\le\varepsilon\PP(\rho,\phi)+C_\varepsilon\AW(\rho,\phi)
\end{equation}
for all $\rho\in\Pplus$ and every $\phi$ with $\rho^T\phi=0$.
\end{lemma}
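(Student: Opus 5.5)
The plan is to localize as in \cref{lem:local-absorption} and then use compactness of $\Pbar$ as in the sufficiency proof of \cref{thm:classification}. The one change is the support layer. There I will replace the reaction bound \eqref{eq:support-layer}, which costs $1/\lambda$, by an anchoring argument that uses only transport along paths of length at most two. I also work with a shifted potential, which removes the need to control the gauge constant directly.

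\emph{Localization.} Fix $\rho^*\in\Pbar$ and $S=\supp\rho^*$. Fix $k\in S$ with $\rho^*_k=\max_i\rho^*_i\ge 1/N$, and set $\chi=\phi-\phi_k\one$. Since $\PP$ and $\AW$ only see differences, they are unchanged under $\phi\mapsto\chi$. The gauge $\rho^T\phi=0$ gives $\phi_k=-\sum_i\rho_i\chi_i$. By Cauchy--Schwarz and $\eta_i\ge\rho_i$,
\[
 \phi_k^2\le\sum_i\rho_i\chi_i^2\le\sum_i\eta_i\chi_i^2 .
\]
Combining this with \eqref{eq:eta-bounds}, I obtain
\[
 \Q_\eta(\rho,\phi)\le 2\Bigl(1+(1+d)(1+\log N)\Bigr)\sum_i\eta_i\chi_i^2 .
\]
So it suffices to prove $\sum_i\eta_i\chi_i^2\le\varepsilon\PP+C\AW$ near $\rho^*$.

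\emph{Anchoring the support layer without $\lambda$.} Shrink the neighborhood so that $\rho_s\ge m$ on $S$ and all coordinates outside $S$ are below the thresholds used in \cref{lem:local-absorption}. Because $\diam(G)\le2$, every $s\in S$ is joined to $k$ by a fixed path of length at most two, $s$--$x$--$k$ or $s$--$k$. Each edge of such a path falls into one of two cases.
\begin{itemize}
\item If both endpoints lie in $S$, then $\theta\ge\min$ of the masses $\ge m$, so $\AW$ controls the squared difference across the edge with constant $1/(\omega m)$.
\item If the middle vertex $x\notin S$, both edges touch the small vertex $x$. The first-layer computation in \cref{lem:local-absorption} then gives
\[
 p_{s,x},\,p_{x,k}\ \ge\ \frac{\omega^2m^2}{4\rho_x|\log\rho_x|^2}\ \longrightarrow\ \infty ,
\]
so these squared differences are bounded by $\gamma\PP$ with $\gamma\to0$.
\end{itemize}
Hence $\sum_{s\in S}\chi_s^2\le 2\gamma\PP+C\AW$. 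This replaces \eqref{eq:support-layer}, and $\chi_k=0$ provides the anchor.

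\emph{The remaining layers.} These are handled verbatim from \cref{lem:local-absorption}, with $\chi$ in place of $\phi$. Those estimates used only the $\PP$ coefficients $p_{s(t),t}$ and $p_{t(u),u}$, the scalar \cref{lem:scalar-one,lem:scalar-two}, and the entropy-of-neighbors bound for $\eta_u$, never the gauge. Since $\diam(G)\le2$ forces $\dist(\cdot,S)\le2$, the layers $T$ and $U$ exhaust $V\setminus S$. On $S$ itself I use $\eta_s\le K_\eta$. The result is
\[
 \sum_i\eta_i\chi_i^2\le\bigl[2\beta+C'\gamma\bigr]\PP+C''\AW ,
\]
with $\beta,\gamma\to0$ as the neighborhood shrinks and $C''$ depending on $m$ and $G$ but not on $\lambda$. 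I then shrink until the $\PP$ coefficient is below $\varepsilon$. Finally, I cover $\Pbar$ by finitely many such neighborhoods and take the largest constant, exactly as in the sufficiency proof. As there, this is legitimate because every estimate is a quadratic-form inequality valid for all potentials.

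\emph{Main obstacle.} The delicate step is the anchoring. The gauge constant $\phi_k$ must be controlled without the variance term $\AC$, and mass can concentrate on several disconnected support vertices. The device that resolves both is the pair of facts above: $\phi_k^2\le\sum_i\rho_i\chi_i^2$, and length-two paths whose bad edges always touch a vanishing vertex, where the retained inflow $\PP$ blows up. I would check carefully that an intermediate vertex $x\notin S$ satisfies the first-layer hypothesis $\rho_x\le me^{-2}$ on the neighborhood. I would also check that the constants $m$ and $\omega$ enter only through $C''$ and never through a factor $1/\lambda$.
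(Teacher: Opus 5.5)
Your proposal is correct and follows the paper's proof essentially step for step. The shared steps are these: a root in $S$ with the temporary gauge $\chi=\phi-\phi_k\one$; anchoring $S$ through paths of length at most two, whose edges either lie inside $S$ (controlled by $\AW$ with constant $1/(\omega m)$) or pass through a first-layer vertex (where the inflow coefficients blow up); reuse of the first- and second-layer estimates of \cref{lem:local-absorption}; the return to the centered gauge with the same factor $2[1+(1+d)(1+\log N)]$; and a finite cover of $\Pbar$. Your choice of $k$ maximizing $\rho^*_k$ is harmless but unnecessary, since any root in $S$ works.
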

\begin{proof}
Work near a fixed $\rho^*\in\Pbar$, and retain the layers and selected edges of \cref{lem:local-absorption}. Choose a root $r\in S$ and use the temporary gauge $\chi=\phi-\phi_r\one$, so $\chi_r=0$. Each $s\in S$ is connected to $r$ by a fixed path of length at most two.

If the whole path lies in $S$, every edge on it has mobility at least $\omega m$. Cauchy--Schwarz along the path bounds $\chi_s^2$ by $2\AW/(\omega m)$. Otherwise the intermediate vertex belongs to $T$, and the inflow coefficients of both path edges tend uniformly to infinity. The same path estimate bounds $\chi_s^2$ by a quantity $o(1)\PP$. Summing over the finitely many chosen paths gives
\[
 \sum_{s\in S}\chi_s^2\le C_S\AW+\alpha(\rho)\PP,
 \qquad\alpha(\rho)\longrightarrow0.
\]
The estimates \eqref{eq:first-layer} and \eqref{eq:second-layer} use only differences and endpoint squares, so they remain valid in this gauge. Hence
\[
 \sum_i\eta_i\chi_i^2\le C\AW+o(1)\PP.
\]
To return to the centered gauge, put $\mu=\rho^T\chi$. Since $\rho^T\phi=0$, one has $\phi=\chi-\mu\one$. Jensen's inequality and \eqref{eq:eta-bounds} imply
\[
 \mu^2\le\sum_i\rho_i\chi_i^2\le\sum_i\eta_i\chi_i^2,
\]
and therefore
\[
 \Q_\eta(\rho,\phi)
 \le2[1+(1+d)(1+\log N)]\sum_i\eta_i\chi_i^2.
\]
Shrink the neighborhood until the coefficient of $\PP$ is at most the prescribed $\varepsilon$. If $\rho^*$ is an interior point, all paths remain in $S=V$ and there is no vanishing coefficient to consider. A finite relative cover of $\Pbar$ gives \eqref{eq:transport-absorption} globally.
\end{proof}

\begin{proof}[Proof of \cref{thm:lipschitz}]
The case $\Lambda=0$ is immediate. Suppose $\Lambda>0$. Choose $\varepsilon\le1/(4C_\Lambda)$ in \cref{lem:transport-absorption} and use \eqref{eq:master-lower}. There is a constant $D<\infty$ such that
\begin{equation}\label{eq:common-coercivity}
 \B_\lambda\ge\tfrac14\PP-D\AW\qquad(0\le\lambda\le\Lambda).
\end{equation}
It follows that $\kappa_\lambda\ge-D$, taking $D\ge0$. An explicit common upper bound is available at the uniform state. Let $\gamma_G>0$ be the smallest positive eigenvalue of $L$, and let $\phi\perp\one$ be a corresponding eigenvector. At $\rho=\one/N$ one has $h=0$ and
\[
 K_\lambda=\frac1N\left(L+\lambda\left(I-\frac{\one\one^T}{N}\right)\right),\qquad
 \B_\lambda=N\|K_\lambda\phi\|^2.
\]
Therefore $\B_\lambda/\A_\lambda=\gamma_G+\lambda$, and
$\kappa_\lambda\le U:=\gamma_G+\Lambda$ on $[0,\Lambda]$.

Fix $\lambda\in[0,\Lambda]$ and a test pair satisfying
\[
 q_\lambda:=\frac{\B_\lambda}{\A_\lambda}\le\kappa_\lambda+1.
\]
Using \eqref{eq:common-coercivity} and $\AW\le\A_\lambda$, we get
\[
 \frac{\PP}{\A_\lambda}\le4(U+1+D).
\]
Apply \cref{lem:transport-absorption} once more, with any fixed positive choice of $\varepsilon$. It follows that $\Q_\eta/\A_\lambda$ and $\AC/\A_\lambda$ are uniformly bounded on these near-minimizing pairs. Also $|q_\lambda|$ is uniformly bounded by the preceding bounds on $\kappa_\lambda$.

For the same pair and $\mu\in[0,\Lambda]$, \eqref{eq:quadratic}, \eqref{eq:action}, and \eqref{eq:absolute-controls} give
\[
 |\B_\mu-\B_\lambda|\le C_\Lambda'|\mu-\lambda|\Q_\eta,
 \qquad |\A_\mu-\A_\lambda|=|\mu-\lambda|\AC.
\]
For $|\mu-\lambda|$ less than a fixed positive number, independent of the near-minimizing pair, $\A_\mu\ge\A_\lambda/2$. The exact quotient identity is
\[
 q_\mu-q_\lambda
 =\frac{(\B_\mu-\B_\lambda)-q_\lambda(\A_\mu-\A_\lambda)}{\A_\mu}.
\]
It yields $|q_\mu-q_\lambda|\le L|\mu-\lambda|$ with a uniform $L$. Take a sequence of pairs for which $q_\lambda\downarrow\kappa_\lambda$, and use $\kappa_\mu\le q_\mu$. This proves
$\kappa_\mu-\kappa_\lambda\le L|\mu-\lambda|$ for small separations. Exchanging the parameters gives the reverse inequality. Subdividing any interval in $[0,\Lambda]$ into finitely many sufficiently short subintervals proves the claimed Lipschitz bound. Its consequence for strictly positive $\kappa_0$ follows by continuity.
\end{proof}

\begin{remark}
The preceding proof needs a relative bound on $\AC/\A_\lambda$ only for near-minimizing pairs. It does not assume a uniform comparison of $\AC$ and $\AW$ for arbitrary states and arbitrary potentials. Likewise, \cref{thm:classification} establishes boundedness from below, not nonnegativity.
\end{remark}

\section{Sharp logarithmic-amplitude dissipation}\label{sec:amplitude}

\subsection{Global positive solutions and entropy dissipation}
\begin{proposition}[Well-posedness and the dissipation identity]\label{prop:flow-basic}
For every $\lambda\ge0$ and $\rho_0\in\Pplus$, \eqref{eq:flow} has a unique global positive solution. Along this solution, the minimum coordinate is nondecreasing and the maximum coordinate is nonincreasing. Moreover,
\begin{equation}\label{eq:entropy-identity}
 -\frac{\dd}{\dd t}H(\rho_t)=\IW(\rho_t)+\lambda\IF(\rho_t),
\end{equation}
where
\begin{align*}
 \IW(\rho)&=\sum_{\{i,j\}\in E}w_{ij}(\rho_i-\rho_j)(\log\rho_i-\log\rho_j),\\
 \IF(\rho)&=\Var_\rho(\log(N\rho)).
\end{align*}
The uniform probability is the only stationary point in $\Pplus$.
\end{proposition}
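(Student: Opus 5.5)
The plan is to treat \eqref{eq:flow} as a smooth ODE on the open set $(0,\infty)^N$ and first check that the affine hyperplane $\one^T\rho=1$ is invariant. Indeed $\one^TL\rho=0$, and $\one^T(\rho\odot(h-H\one))=\sum_i\rho_ih_i-H\one^T\rho=0$ whenever $\one^T\rho=1$. More robustly, I will write the reaction term as $\rho\odot(h-\rho^Th\,\one)$, which is smooth on the positive orthant. Then $\frac{\dd}{\dd t}(\one^T\rho)=\lambda(1-\one^T\rho)\rho^Th$, so the total mass stays equal to one. Local existence and uniqueness follow from the smoothness of the vector field on $(0,\infty)^N$. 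Global existence will follow from an a priori confinement in a compact subset of $\Pplus$.

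The confinement comes from the monotonicity of the extreme coordinates. Let $i$ be an index attaining $\min_k\rho_k$. Then $(L\rho)_i=\sum_jw_{ij}(\rho_i-\rho_j)\le0$. Also $h_i=\min_kh_k$, so $h_i-H\le0$ because $H=\sum_k\rho_kh_k$ is a convex average of the $h_k$. Hence $\dot\rho_i\ge0$ at every index attaining the minimum. A standard Dini-derivative argument for the minimum of finitely many $C^1$ functions then gives $D^+\min_k\rho_k\ge0$, and the symmetric argument gives $D^+\max_k\rho_k\le0$. So $\min_k\rho_k(t)\ge\min_k\rho_{0,k}>0$ for as long as the solution exists. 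The trajectory therefore stays in the compact set $\{\rho\in\Pbar:\rho_k\ge\min\rho_0\}$, which rules out finite-time blow-up or exit, and the solution is global and positive.

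For the dissipation identity I will use the gradient-flow form $\dot\rho=-K_\lambda(h+\one)=-K_\lambda h$ established after \eqref{eq:euler}. This gives
\[
 \frac{\dd}{\dd t}H=(h+\one)^T\dot\rho=-h^TK_\lambda h=-\A_\lambda(\rho,h).
\]
By \eqref{eq:action}, $\AW(\rho,h)=\sum_Ew_{ij}\theta_{ij}(h_i-h_j)^2$. The identity $\theta_{ij}(\log\rho_i-\log\rho_j)=\rho_i-\rho_j$ in \eqref{eq:euler} turns this into $\IW$. Also $\AC(\rho,h)=\Var_\rho(h)=\IF$, which proves \eqref{eq:entropy-identity}.

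For stationarity, suppose $\rho\in\Pplus$ is a stationary point. Then $h^TK_\lambda h=0$, and since $K_\lambda$ is positive definite on $\one^\perp$, $h$ is constant. As $\sum_i\rho_i=1$, this forces $\rho=\one/N$. Conversely, the uniform probability is clearly stationary.

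The only delicate point is the Dini argument for the extremal coordinates when several indices are tied. I will handle it by the elementary fact that $D^+\min_kf_k(t)=\min\{f_k'(t): f_k(t)=\min f\}$ for $C^1$ functions.
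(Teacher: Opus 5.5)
Your proof is correct and follows the paper's argument essentially step by step. The sign of the heat and reaction terms at extremal coordinates confines the solution to a compact subset of $\Pplus$, giving global existence; $\dot H=-h^TK_\lambda h$ together with \eqref{eq:euler} and the covariance formula gives \eqref{eq:entropy-identity}; and vanishing dissipation forces $\rho$ to be constant. Your explicit extension to the orthant is a harmless addition, but the mass equation actually reads $\frac{\dd}{\dd t}(\one^T\rho)=-\lambda(1-\one^T\rho)\rho^Th$; the sign does not matter, because the equation is linear and homogeneous in $1-\one^T\rho$.
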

\begin{proof}
The vector field is smooth on $\Pplus$ and has zero coordinate sum. At a minimal coordinate, the heat contribution $\sum_jw_{ij}(\rho_j-\rho_i)$ is nonnegative, and $h_i\le H$ makes the reaction contribution nonnegative as well. At a maximal coordinate both contributions are nonpositive. The corresponding one-sided derivative statements at tied extrema imply
\[
 0<\min_i\rho_i(0)\le\rho_i(t)\le\max_i\rho_i(0)<1.
\]
Thus the solution remains in a compact subset of $\Pplus$, so standard local existence and uniqueness extend it for all times.

Differentiating $H$ gives $\dot H=-h^TK_\lambda h$. The identities in \eqref{eq:euler} and the covariance formula reduce this to \eqref{eq:entropy-identity}. Both dissipations are nonnegative. If the vector field vanishes, then $\IW=0$; connectedness forces all coordinates to be equal. Conversely the uniform probability is stationary.
\end{proof}

\subsection{The exact instantaneous optimization}
\begin{proof}[Proof of \cref{thm:amplitude}]
For a positive solution, $R(t)=\max_{i,j}(\log\rho_i(t)-\log\rho_j(t))$. The upper right derivative of a finite maximum of differentiable functions is the maximum of the derivatives over its active indices. Fix an active ordered pair $(i,j)$, with $i$ a maximum-density vertex and $j$ a minimum-density vertex. Put
\[
 r=R(t),\quad E=e^r,\quad y_k=\rho_k/\rho_j\in[1,E].
\]
The reaction contribution to the difference of logarithmic derivatives is exactly $-\lambda r$. The negative of the heat contribution is
\begin{equation}\label{eq:affine-loss}
 L_{ij}=w_{ij}(E-E^{-1})
 +\sum_{k\notin\{i,j\}}\{w_{ik}(1-y_k/E)+w_{jk}(y_k-1)\}.
\end{equation}
Each summand is affine in $y_k$, and its minimum on $[1,E]$ is
$\min\{w_{ik}(1-E^{-1}),w_{jk}(E-1)\}$. Consequently $L_{ij}\ge F_{ij}(r)\ge F_G(r)$ for every active pair, proving \eqref{eq:R-dissipation}.

To prove sharpness, choose an ordered pair attaining the global minimum $F_G(r)$. Set $y_i=E$, $y_j=1$, and choose each remaining $y_k\in\{1,E\}$ to minimize its affine summand in \eqref{eq:affine-loss}. Normalize by $\rho_k=y_k/\sum_\ell y_\ell$. The chosen pair is active and its derivative difference is $-\lambda r-F_G(r)$. The full upper right derivative is at least this difference, while the universal bound already proved is at most the same number. Equality follows, including when the extrema are tied. Any strictly larger proposed dissipation value at $r$ would fail at this initial state.

Expanding the finite family $F_{ij}$ at zero gives \eqref{eq:cG}. Since $e^r-1\ge1-e^{-r}$ and $e^r-e^{-r}\ge2(1-e^{-r})$, each $F_{ij}$ is at least its corresponding coefficient in \eqref{eq:cG} times $1-e^{-r}$. This proves $F_G(r)\ge c_G(1-e^{-r})$.

For a given pair, that coefficient is positive exactly when the pair is adjacent or has a common neighbor. Its positivity for every pair is equivalent to $\diam(G)\le2$. If a pair has distance at least three, then its entire function $F_{ij}$ is zero, and therefore $F_G\equiv0$. Combining with \cref{thm:classification} proves \eqref{eq:common-threshold}.
\end{proof}

\subsection{Scalar envelopes}
\begin{corollary}[Amplitude bounds]\label{cor:amplitude-envelopes}
Let $R_0=R(\rho_0)$. The scalar initial-value problem
\[
 \dot r_G=-\lambda r_G-F_G(r_G),\qquad r_G(0)=R_0
\]
has a unique global nonnegative solution, and $R(\rho_t)\le r_G(t)$. In addition,
\begin{equation}\label{eq:amplitude-envelope}
 R(\rho_t)\le\min\left\{R_0e^{-\lambda t},\,
 \log\big[1+(e^{R_0}-1)e^{-(\lambda+c_G)t}\big]\right\}.
\end{equation}
\end{corollary}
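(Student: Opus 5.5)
We need to prove \cref{cor:amplitude-envelopes}: well-posedness of the scalar problem, the comparison $R(\rho_t)\le r_G(t)$, and the explicit envelope \eqref{eq:amplitude-envelope}.

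\emph{Scalar problem.} First we check that $F_G$ is locally Lipschitz on $[0,\infty)$. Each $F_{ij}$ is a finite sum of $w_{ij}(e^r-e^{-r})$ and pointwise minima of two smooth functions, and each such minimum is locally Lipschitz. So $F_G$, a finite minimum of the $F_{ij}$, is locally Lipschitz. Also $F_G\ge0$ and $F_G(0)=0$. Hence the scalar right-hand side $-\lambda r-F_G(r)$ is locally Lipschitz, vanishes at $0$, and is nonpositive for $r\ge0$. By uniqueness, the solution starting at $R_0\ge0$ cannot cross the equilibrium $0$. It is nonincreasing, so it stays in $[0,R_0]$ and exists globally.

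\emph{Comparison.} By \cref{thm:amplitude}, $D^+R\le-\lambda R-F_G(R)$. We then apply the standard comparison lemma for Dini derivatives with a locally Lipschitz right-hand side. Concretely, compare $R$ with the solution $r^\delta$ started at $R_0+\delta$, where $r^\delta$ solves the same equation plus $\delta$. At a first crossing time $\tau$ we would have $R(\tau)=r^\delta(\tau)$ and $D^+R(\tau)\ge\dot r^\delta(\tau)$. But $\dot r^\delta(\tau)$ is strictly larger than the bound on $D^+R(\tau)$, a contradiction. Letting $\delta\to0$ and using continuous dependence gives $R(\rho_t)\le r_G(t)$. This step needs care because $R$ is only Lipschitz and $D^+$ is one-sided, but it is routine.

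\emph{Explicit envelopes.} Since $F_G\ge0$, we have $D^+R\le-\lambda R$. Comparison with $R_0e^{-\lambda t}$, or a direct Dini argument on $e^{\lambda t}R$, gives the first bound.

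For the second bound we use $F_G(r)\ge c_G(1-e^{-r})$ from \cref{thm:amplitude} together with $r\ge1-e^{-r}$. These give
\[
 D^+R\le-(\lambda+c_G)(1-e^{-R}).
\]
Substitute $Y=e^R-1$. Since $x\mapsto e^x-1$ is smooth and increasing, the chain rule holds for upper Dini derivatives:
\[
 D^+Y=e^RD^+R\le-(\lambda+c_G)(e^R-1)=-(\lambda+c_G)Y.
\]
Hence $Y(t)\le(e^{R_0}-1)e^{-(\lambda+c_G)t}$, again by the Dini form of Gr\"onwall (apply it to $e^{(\lambda+c_G)t}Y$, whose $D^+$ is $\le0$, so it is nonincreasing). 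Taking logarithms gives the second term, and the minimum of the two bounds follows.

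The only genuinely delicate point is the Dini-derivative comparison for the nonsmooth function $R$ and the possibly nonsmooth $F_G$. Local Lipschitz continuity of $F_G$ and the perturbation argument handle it.
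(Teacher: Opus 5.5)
Your proposal is correct and follows the paper's own argument: comparison with the scalar problem via \eqref{eq:R-dissipation} and the local Lipschitz property of $F_G$, dropping $F_G$ for the first envelope, and passing to $e^{R}-1$ for the second, using $F_G(r)\ge c_G(1-e^{-r})$ and $r\ge 1-e^{-r}$ (equivalent to the paper's $e^RR\ge e^R-1$). In the comparison step, take $\tau$ to be the supremum of times up to which $R\le r^\delta$; then $D^+R(\tau)\ge\dot r^\delta(\tau)$ follows from a sequence $t_n\downarrow\tau$ with $R(t_n)>r^\delta(t_n)$.
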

\begin{proof}
The function $F_G$ is continuous, locally Lipschitz, nonnegative, and nondecreasing, being a finite minimum of functions with these properties. It vanishes at zero. Standard scalar comparison with \eqref{eq:R-dissipation} yields the asserted envelope. Dropping $F_G$ gives the first bound in \eqref{eq:amplitude-envelope}.

For the second, put $U(t)=e^{R(t)}-1$. By \cref{thm:amplitude},
\[
 D^+U\le-\lambda e^R R-c_G(e^R-1)\le-(\lambda+c_G)U,
\]
where $e^RR\ge e^R-1$ for $R\ge0$. Integration proves the result.
\end{proof}

\subsection{Explicit graph families}
\begin{proposition}[Graph-family formulas]\label{prop:families}
For common edge weight $w>0$, the functions and coefficients in \eqref{eq:FG} and \eqref{eq:cG} are given in \cref{tab:families}. For the four-cycle with coordinate weights $m\le M$,
\begin{equation}\label{eq:anisotropic}
 F_{Q_2}(r)=(1-e^{-r})\min\{m(e^r+1),m+M\},\qquad c_{Q_2}=2m.
\end{equation}
\end{proposition}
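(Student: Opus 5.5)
The plan is to evaluate the definitions \eqref{eq:FG} and \eqref{eq:cG} directly. For each ordered pair $(i,j)$, we minimize $F_{ij}$ over the pairs, and we use symmetry of each family to reduce to a few pair types. The families in \cref{tab:families} are presumably the complete graph $K_N$, the star, the cycles $C_4$ and $C_5$, and complete bipartite graphs. For a given family we classify ordered pairs by adjacency and by the number of common neighbours and private neighbours. For a third vertex $k$, the summand $\min\{w_{ik}(1-e^{-r}),w_{jk}(e^r-1)\}$ vanishes unless $k$ is a common neighbour of $i$ and $j$. For a common neighbour with equal weights it equals $w(1-e^{-r})$, because $1-e^{-r}\le e^r-1$. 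Hence, with common weight $w$,
\[
 F_{ij}(r)=w\,\mathbf 1_{i\sim j}(e^r-e^{-r})+w\,|N(i)\cap N(j)|\,(1-e^{-r}).
\]
The value $F_G$ is the minimum of this expression over the listed pair types. This is a comparison of finitely many functions of the form $\alpha(e^r-e^{-r})+\beta(1-e^{-r})$. Since $e^r-e^{-r}=(1-e^{-r})(e^r+1)\ge 2(1-e^{-r})$, such comparisons reduce to comparing $\alpha(e^r+1)+\beta$. The constant $c_G$ is obtained by replacing $e^r+1$ with $2$ and $1-e^{-r}$ with $r$, consistent with $F_G(r)/r\to c_G$ from \cref{thm:amplitude}. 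For the table entries I will compute each pair type, for example adjacent versus non-adjacent pairs in $K_{p,q}$, and report the minimum.

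For the anisotropic four-cycle, label the vertices $00,10,01,11$. Horizontal edges carry weight $m$ and vertical edges carry weight $M$, so the graph is $Q_2$ with coordinate weights, $m\le M$. There are three pair types.

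\emph{Horizontal neighbours}, e.g.\ $(00,10)$. They are adjacent with weight $m$ and have no common neighbour, so $F=m(e^r-e^{-r})=m(1-e^{-r})(e^r+1)$.

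\emph{Vertical neighbours.} These give $M(1-e^{-r})(e^r+1)$, which is at least the horizontal value.

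\emph{Diagonal pairs}, e.g.\ $(00,11)$. The two common neighbours are $10$ and $01$. For $k=10$ the weights are $w_{ik}=m$ and $w_{jk}=M$, giving $\min\{m(1-e^{-r}),M(e^r-1)\}=m(1-e^{-r})$, since $M(e^r-1)\ge m(e^r-1)\ge m(1-e^{-r})$. For $k=01$ the weights are swapped, giving $\min\{M(1-e^{-r}),m(e^r-1)\}=(1-e^{-r})\min\{M,me^r\}$.

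The minimum over the three types is therefore
\[
 (1-e^{-r})\min\{m(e^r+1),\,m+\min\{M,me^r\}\}=(1-e^{-r})\min\{m(e^r+1),m+M\},
\]
because $m+me^r$ is already one of the competitors. This gives \eqref{eq:anisotropic}. Dividing by $r$ and letting $r\to0$ yields $\min\{2m,m+M\}=2m$, which agrees with \eqref{eq:cG}: horizontal pairs give $2m$, vertical pairs give $2M$, and diagonal pairs give $m+m=2m$.

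The main obstacle is bookkeeping rather than analysis. In the anisotropic case one must not assume symmetric minima, since one diagonal common neighbour has its orientation reversed. The table entries also need care, because pairs at distance two in sparse families, such as the leaves of a star, can have a single common neighbour and may beat the adjacent pairs for small $r$ but not for large $r$. This is handled by comparing $\alpha(e^r+1)+\beta$ explicitly and writing the result as a minimum.
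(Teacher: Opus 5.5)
Your approach is correct and essentially the paper's. Your common-weight formula for $F_{ij}$ yields each row of the table ($K_N$, $K_{1,n}$, $K_{m,n}$, $Q_2$, and $0$ for a pair at distance $\ge3$) in one line, and your anisotropic four-cycle computation matches the paper's, with your remark that $m+me^r$ is already a competitor replacing its case split at $e^r=M/m$.

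One side remark is backwards, though it does not affect the argument since you take the full minimum: a nonadjacent pair's loss $\beta w(1-e^{-r})$ stays bounded, so it beats adjacent pairs for large $r$, not small $r$. In the star the leaf pair wins for every $r>0$, while in $K_{m,n}$ with $m,n\ge3$ the adjacent pair wins only while $e^r+1<\min\{m,n\}$.
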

\begin{table}[htbp]
\centering
\caption{Sharp amplitude dissipation for selected graph families.}\label{tab:families}
\renewcommand{\arraystretch}{1.3}
\begin{tabular}{@{}lll@{}}
\toprule
Graph & $F_G(r)$ & $c_G$\\
\midrule
$K_N$, $N\ge2$ & $w(1-e^{-r})(e^r+N-1)$ & $Nw$\\
$K_{1,n}$, $n\ge2$ & $w(1-e^{-r})$ & $w$\\
$K_{m,n}$, $m,n\ge2$ & $w(1-e^{-r})\min\{e^r+1,m,n\}$ & $2w$\\
$Q_2=K_{2,2}$ & $2w(1-e^{-r})$ & $2w$\\
$\diam(G)\ge3$ & $0$ & $0$\\
\bottomrule
\end{tabular}
\end{table}
\begin{proof}
For $K_N$, every other vertex contributes $w(1-e^{-r})$, in addition to the direct-edge term $w(e^r-e^{-r})$. For a star, a leaf-to-leaf pair contributes $w(1-e^{-r})$; each center-to-leaf orientation has at least the larger direct-edge loss.

For $K_{m,n}$, two vertices on the same side have loss equal to the size of the opposite side times $w(1-e^{-r})$. An adjacent pair on opposite sides has no common neighbor, and its loss is $w(e^r-e^{-r})$. Taking the minimum of these three pair types gives the displayed formula. In particular,
\[
 c_{K_{m,n}}=w\min\{2,m,n\}=2w\qquad(m,n\ge2).
\]
The adjacent-pair term must be included; taking only the same-side pairs would incorrectly give $w\min\{m,n\}$ when both sides have more than two vertices. The four-cycle is the case $m=n=2$.

For the anisotropic four-cycle, put $E=e^r$. Adjacent pairs contribute $m(E-E^{-1})$ or $M(E-E^{-1})$. For an opposite pair the two common neighbors have interchanged incident weights, so its contribution is
\begin{align*}
 &(1-E^{-1})\bigl[\min\{m,ME\}+\min\{M,mE\}\bigr]\\
 &\hspace{2em}=(1-E^{-1})\bigl[m+\min\{M,mE\}\bigr].
\end{align*}
For $E\le M/m$ this equals $m(E-E^{-1})$; for $E\ge M/m$ it equals $(m+M)(1-E^{-1})$. Taking the minimum over all ordered pairs gives \eqref{eq:anisotropic}, including $m=M$. The last row follows from \cref{thm:amplitude}.
\end{proof}

\begin{remark}
The star formula shows why the small-amplitude coefficient cannot be extended to the bound $F_G(r)\ge c_Gr$ for all $r>0$. Indeed $w(1-e^{-r})<wr$. For the two-state graph $K_{1,1}$, one must use the complete-graph formula, which gives $F_G(r)=2w\sinh r$.
\end{remark}

\section{Entropy estimates without a curvature assumption}\label{sec:entropy}

\subsection{A sharp amplitude-dependent variance inequality}
Define
\[
 a(r)=\frac{r^2}{e^r-1-r}\quad(r>0),\qquad a(0)=2.
\]
The reciprocal has the useful representation
\[
 \frac1{a(r)}=\int_0^1 s e^{(1-s)r}\dd s.
\]
It follows that $a$ is decreasing and $0<a(r)\le2$.

\begin{theorem}[Variance--entropy comparison]\label{thm:variance}
Every $\rho\in\Pplus$ satisfies
\begin{equation}\label{eq:variance-entropy}
 \IF(\rho)\ge a(R(\rho))H(\rho).
\end{equation}
For each fixed $r>0$, the constant is optimal when the number of states may vary:
\[
 \inf_{N\ge2}\ \inf_{\rho\in\Pplus,\ R(\rho)=r}
 \frac{\Var_\rho(\log(N\rho))}{H(\rho)}=a(r).
\]
This does not assert optimality for each fixed $N$.
\end{theorem}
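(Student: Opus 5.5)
The plan is to recast \eqref{eq:variance-entropy} as a one-dimensional convexity estimate, using the two normalizations that $\rho$ carries automatically. Put $g_i=\log(N\rho_i)$, so that $H=H(\rho)=\sum_i\rho_ig_i$ and $\IF(\rho)=\Var_\rho(g)$. Define centered deviations $u_i=H-g_i$; they satisfy $\sum_i\rho_iu_i=0$ and $\Var_\rho(g)=\sum_i\rho_iu_i^2$.

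\emph{Step 1: an exact exponential identity.} Since $\rho_ie^{-g_i}=1/N$, we have $\sum_i\rho_ie^{-g_i}=1$, which is the same as
\[
 \sum_i\rho_ie^{u_i}=e^{H}.
\]
Subtracting $1+\sum_i\rho_iu_i=1$ and writing $\varphi(x)=e^x-1-x$ gives
\[
 \sum_i\rho_i\varphi(u_i)=e^{H}-1\ge H .
\]
The last inequality holds because $H\ge0$.

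\emph{Step 2: a pointwise quadratic bound.} Because $H$ is a $\rho$-average of the $g_k$, it lies between $\min_kg_k$ and $\max_kg_k$. Hence every $u_i\le\max_kg_k-\min_kg_k=R(\rho)$. From the representation $\varphi(x)/x^2=\int_0^1 s e^{(1-s)x}\dd s$, stated just before the theorem, the function $\varphi(x)/x^2$ is increasing on all of $\R$, and its value at $R$ is $1/a(R)$. So $\varphi(u_i)\le u_i^2/a(R)$ for every $i$, negative $u_i$ included. Summing against $\rho$ and combining with Step 1 gives $H\le\Var_\rho(g)/a(R)$, which is \eqref{eq:variance-entropy}. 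The degenerate case $R=0$ forces $\rho$ uniform, so $H=0$ and there is nothing to prove.

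\emph{Step 3: optimality.} The proof loses only in two places: $e^H-1\ge H$, and $\varphi(u)\le u^2/a(R)$. The first is sharp as $H\to0$. The second is sharp when the $u_i$ take only the values $0$ and $R$. So I will test two-level probabilities with $n$ low-density states and $N-n$ high-density states, at density ratio $e^r$.

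In the regime to aim for, the low-density states carry mass fraction $p$ and sit at $u\approx R$. The high-density states carry the remaining mass $1-p$ and sit at $u\approx0$, meaning their level is close to $H$. For that, most of the mass must be high, and the number of low states must be tuned so that $H\to0$. Concretely, I take $N\to\infty$ with $n/N\to\alpha$ and let $\alpha\to0$ suitably. For each configuration I compute $\Var$ and $H$ exactly in terms of $\alpha$ and $e^r$, and check that the ratio tends to $a(r)$. The amplitude stays exactly $r$ throughout.

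The main obstacle is this limiting computation. One has to make $H\to0$ while keeping the amplitude fixed at $r$, and check that the higher-order terms of $e^H-1-H$ and of the two-point deviation vanish faster than $H$. That both limits can be taken together is why the infimum is over all $N$, and why the statement does not claim optimality for each fixed $N$.
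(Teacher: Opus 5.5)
Your proof is correct, and for the inequality it takes a different route from the paper.

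The paper introduces the exponential tilt $\Psi(s)=\log\sum_i\pi_ie^{sh_i}$ and integrates by parts to get the representation $H=\int_0^1 s\Var_{\mu_s}(h)\dd s$. It then compares each tilted law with $\rho$ through the density bound $\mu_s(i)/\rho_i\le e^{(1-s)R}$. The kernel $\int_0^1 se^{(1-s)R}\dd s=1/a(R)$ therefore appears only after integrating in $s$.

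You instead use the normalization $\sum_i\rho_ie^{-g_i}=1$ once, in the form $\sum_i\rho_i\varphi(u_i)=e^H-1$, and apply the same kernel pointwise to each deviation $u_i\le R$. This avoids the interpolating family and in fact yields the stronger bound $\IF(\rho)\ge a(R(\rho))\bigl(e^{H(\rho)}-1\bigr)$. Since $e^H-1\ge H$ holds for every real $H$, you do not need $H\ge0$. What the paper's route buys is the tilt identity itself, which it reuses in \cref{cor:amplitude-entropy} to obtain $H\le R^2/8$.

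For sharpness, your two-level family is the paper's family with $n$ low states instead of one. The computation you defer is routine. Set $\alpha=n/N$, $E=e^{-r}$, $Z=1-\alpha(1-E)$ and $p=\alpha E/Z$. Then $H=-\log Z-rp$ and $\Var_\rho(\log(N\rho))=r^2p(1-p)$, both functions of $\alpha$ alone. As $\alpha\downarrow0$ the ratio tends to $r^2E/(1-E-rE)=a(r)$. So no double limit is needed; taking $n=1$ and $N\to\infty$, as the paper does, suffices.
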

\begin{proof}
Let $h_i=\log(N\rho_i)$ and $\pi_i=1/N$. Define the exponential tilt
\[
 \Psi(s)=\log\sum_i\pi_i e^{s h_i},\qquad
 \mu_s(i)=\pi_i e^{s h_i-\Psi(s)},\qquad0\le s\le1.
\]
Then $\Psi(0)=\Psi(1)=0$, $\Psi'(1)=H$, and $\Psi''(s)=\Var_{\mu_s}(h)$. Integration by parts gives
\begin{equation}\label{eq:tilt-identity}
 H=\int_0^1s\Var_{\mu_s}(h)\dd s.
\end{equation}
Writing $R=\osc h$, we have
\[
 \frac{\mu_s(i)}{\rho_i}
 =\frac{e^{-(1-s)h_i}}{\sum_j\rho_j e^{-(1-s)h_j}}
 \le e^{(1-s)R}.
\]
Evaluate the variance under $\mu_s$ at the mean of $h$ under $\rho$ to obtain
\[
 \Var_{\mu_s}(h)\le e^{(1-s)R}\Var_\rho(h).
\]
Together with \eqref{eq:tilt-identity}, this proves \eqref{eq:variance-entropy}. The uniform probability, for which both sides vanish, is included by continuity.

For sharpness, fix $r>0$, put $E=e^{-r}$ and $Z=N-1+E$, and take $N-1$ masses equal to $1/Z$ and one mass equal to $E/Z$. These probabilities have amplitude exactly $r$, and direct calculation gives
\[
 H_N=\log(N/Z)-rE/Z,\qquad
 \mathscr I_{F,N}=\frac{r^2(N-1)E}{Z^2}.
\]
As $N\to\infty$,
\[
 NH_N\longrightarrow1-(1+r)e^{-r},\qquad
 N\mathscr I_{F,N}\longrightarrow r^2e^{-r}.
\]
Their ratio tends to $a(r)$, proving the stated infimum.
\end{proof}

\subsection{A graph-dependent entropy prefactor}
Suppose a pure-transport entropy-production bound is available:
\begin{equation}\label{eq:transport-entropy}
 \IW(\rho)\ge\alpha_GH(\rho)\quad\text{for every }\rho\in\Pplus,
 \qquad\alpha_G\ge0.
\end{equation}
This is a separate functional-inequality input, not a nonnegative-curvature assumption for the mixed metric. Taking $\alpha_G=0$ is always valid.

For $\lambda>0$, define
\begin{equation}\label{eq:graph-prefactor}
 \mathcal C_{G,\lambda}(R)
 =\exp\left\{\lambda\int_0^R\frac{2-a(s)}{\lambda s+F_G(s)}\dd s\right\},
 \qquad \mathcal C_{G,\lambda}(0)=1.
\end{equation}
The dependence on $\lambda$ is displayed explicitly. Since $2-a(s)=2s/3+O(s^2)$, the integrand has a removable singularity at zero.

\begin{theorem}[Entropy decay with an improved prefactor]\label{thm:entropy-decay}
Assume \eqref{eq:transport-entropy}. For $\lambda>0$ and every positive solution,
\begin{equation}\label{eq:entropy-decay}
 H(\rho_t)\le\mathcal C_{G,\lambda}(R_0)H(\rho_0)
 e^{-(\alpha_G+2\lambda)t},\qquad R_0=R(\rho_0).
\end{equation}
Set
\begin{equation}\label{eq:baseline-prefactor}
 \mathcal C(R)=\frac{R^2}{2[1-(1+R)e^{-R}]}\quad(R>0),\qquad\mathcal C(0)=1.
\end{equation}
Then $\mathcal C_{G,\lambda}(R)\le\mathcal C(R)$. For $R>0$, the inequality is strict if $\diam(G)\le2$, while equality holds if $\diam(G)\ge3$. An explicit upper bound without integration is
\begin{equation}\label{eq:prefactor-power}
 \mathcal C_{G,\lambda}(R)
 \le\mathcal C(R)^{\lambda/[\lambda+c_G k(R)]},\qquad
 k(R)=\frac{1-e^{-R}}R,\quad k(0)=1.
\end{equation}
No optimality of these entropy prefactors for a fixed graph is asserted.
\end{theorem}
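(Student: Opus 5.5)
The plan is a Grönwall argument on $\log H$, with the Fisher part of the dissipation split into a full rate $2\lambda$ and an amplitude-dependent defect. The defect is then integrated along the amplitude envelope of \cref{cor:amplitude-envelopes}.

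\emph{Step 1 (differential inequality for $\log H$).} By \cref{prop:flow-basic},
\[
 -\tfrac{\dd}{\dd t}H(\rho_t)=\IW(\rho_t)+\lambda\IF(\rho_t).
\]
The assumption \eqref{eq:transport-entropy} bounds $\IW$ below by $\alpha_GH$, and \cref{thm:variance} bounds $\IF$ below by $a(R_t)H$. Writing $\lambda a=2\lambda-\lambda(2-a)$ gives, wherever $H(\rho_t)>0$,
\[
 \tfrac{\dd}{\dd t}\log H(\rho_t)\le-(\alpha_G+2\lambda)+\lambda\bigl(2-a(R(\rho_t))\bigr).
\]
If $H(\rho_0)=0$, the solution is the uniform stationary state and there is nothing to prove. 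Otherwise $H(\rho_t)>0$ for all $t$ by uniqueness. Integrating, it suffices to show
\[
 \lambda\int_0^\infty\bigl(2-a(R(\rho_t))\bigr)\dd t\le\log\mathcal C_{G,\lambda}(R_0).
\]

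\emph{Step 2 (envelope and change of variables).} This is the delicate step, because $R(\rho_t)$ is controlled only through a Dini derivative. I will avoid differentiating $R$ directly. Since $a$ is decreasing, $2-a$ is nonnegative and nondecreasing. \Cref{cor:amplitude-envelopes} gives $R(\rho_t)\le r_G(t)$, hence $2-a(R(\rho_t))\le 2-a(r_G(t))$.

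The scalar solution $r_G$ is $C^1$. Since $\lambda>0$, it is strictly decreasing while positive, it tends to $0$, and it stays positive when $R_0>0$. The substitution $s=r_G(t)$, $\dd s=-(\lambda s+F_G(s))\dd t$, then turns $\lambda\int_0^\infty(2-a(r_G))\dd t$ into exactly the integral in \eqref{eq:graph-prefactor}. The singularity of the integrand at $s=0$ is removable because $2-a(s)=O(s)$. This proves \eqref{eq:entropy-decay}.

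\emph{Step 3 (comparison with $\mathcal C$).} A direct differentiation shows
\[
 \tfrac{\dd}{\dd R}\log\mathcal C(R)=\frac2R-\frac{R}{e^R-1-R}=\frac{2-a(R)}{R},
\]
and $\mathcal C(0)=1$. Hence $\mathcal C(R)=\exp\int_0^R(2-a(s))/s\,\dd s$.

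Since $F_G\ge0$, we have $\lambda/(\lambda s+F_G(s))\le 1/s$, which gives $\mathcal C_{G,\lambda}\le\mathcal C$. If $\diam(G)\ge3$, then $F_G\equiv0$ by \cref{thm:amplitude}, so equality holds. If $\diam(G)\le2$, then $c_G>0$ and $F_G(s)\ge c_G(1-e^{-s})>0$ for $s>0$, while $2-a(s)>0$ for $s>0$. The integrand inequality is therefore strict on $(0,R]$, and so is the comparison.

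\emph{Step 4 (power bound).} For $0<s\le R$, use $F_G(s)\ge c_G(1-e^{-s})=c_Gk(s)s$ together with the fact that $k$ is decreasing, so $k(s)\ge k(R)$. This gives
\[
 \frac{\lambda}{\lambda s+F_G(s)}\le\frac{\lambda}{\lambda+c_Gk(R)}\cdot\frac1s .
\]
Integrating against $2-a(s)\ge0$ and exponentiating yields \eqref{eq:prefactor-power}.
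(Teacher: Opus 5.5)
Your proposal is correct and follows essentially the same route as the paper: a Gr\"onwall bound on $\log H$ from the entropy identity, the transport input, and the variance--entropy comparison, with the defect $\lambda(2-a)$ controlled through the envelope $r_G$ and the substitution $\dd s=-(\lambda s+F_G(s))\dd t$, followed by the same denominator comparisons for $\mathcal C$ and for the power bound. Your additional care (positivity of $H$ by uniqueness, and positivity and strict decrease of $r_G$) only makes explicit what the paper leaves implicit.
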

\begin{proof}
The equilibrium case is immediate. Otherwise use \eqref{eq:entropy-identity}, \eqref{eq:transport-entropy}, and \cref{thm:variance}. With the scalar envelope $r_G$ from \cref{cor:amplitude-envelopes} and the monotonicity of $a$,
\[
 \frac{\dd}{\dd t}\log H(\rho_t)
 \le-\alpha_G-\lambda a(r_G(t))
 =-(\alpha_G+2\lambda)+\lambda[2-a(r_G(t))].
\]
Since $\lambda>0$, $r_G$ decreases to zero. Integrate in time and bound the correction by its integral over $[0,\infty)$. The substitution
$\dot r_G=-\lambda r_G-F_G(r_G)$ gives exactly the logarithm of \eqref{eq:graph-prefactor}, proving \eqref{eq:entropy-decay}.

Direct differentiation of \eqref{eq:baseline-prefactor}, followed by its limit at zero, gives
\[
 \log\mathcal C(R)=\int_0^R\frac{2-a(s)}s\dd s.
\]
Comparing denominators proves $\mathcal C_{G,\lambda}\le\mathcal C$. If the diameter is at most two, $F_G(s)>0$ for every $s>0$, and $2-a(s)>0$, so the inequality is strict for $R>0$. If the diameter is at least three, $F_G\equiv0$, giving equality.

Finally, $k$ is decreasing, and \cref{thm:amplitude} gives $F_G(s)\ge c_G k(R)s$ for $0\le s\le R$. Substitution in \eqref{eq:graph-prefactor} proves \eqref{eq:prefactor-power}.
\end{proof}

At $\lambda=0$, one uses \eqref{eq:entropy-identity} and \eqref{eq:transport-entropy} directly to obtain $H(\rho_t)\le H(\rho_0)e^{-\alpha_Gt}$; \eqref{eq:graph-prefactor} is not defined by multiplying an undefined integral by zero. When $\lambda>0$, even the choice $\alpha_G=0$ provides exponential entropy decay on every connected graph, including graphs whose entropic curvature is $-\infty$ by \cref{cor:singular}.

\begin{corollary}[An amplitude-only entropy bound]\label{cor:amplitude-entropy}
For every $\lambda\ge0$ and every positive solution,
\[
 H(\rho_t)\le\frac18\log^2\big[1+(e^{R_0}-1)e^{-(\lambda+c_G)t}\big]
 \le\frac{(e^{R_0}-1)^2}{8}e^{-2(\lambda+c_G)t}.
\]
\end{corollary}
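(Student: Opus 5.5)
The plan is to combine a static inequality $H(\rho)\le R(\rho)^2/8$, valid for every $\rho\in\Pplus$, with the amplitude envelope of \cref{cor:amplitude-envelopes}.

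\emph{Step 1: the static inequality.} I would use the exponential tilt from the proof of \cref{thm:variance}. With $h_i=\log(N\rho_i)$ and $\mu_s(i)=\pi_ie^{sh_i-\Psi(s)}$, identity \eqref{eq:tilt-identity} gives
\[
 H(\rho)=\int_0^1 s\,\Var_{\mu_s}(h)\dd s .
\]
The function $h$ takes values in an interval of length $\osc h=\osc_i\log\rho_i=R(\rho)$. Popoviciu's inequality then gives $\Var_\mu(h)\le R^2/4$ for every probability $\mu$. I would check this directly: evaluate the second moment about the midpoint of the range, where $|h_i-\text{mid}|\le R/2$. Integrating,
\[
 H(\rho)\le\frac{R(\rho)^2}{4}\int_0^1s\dd s=\frac{R(\rho)^2}{8}.
\]
The uniform probability, where both sides vanish, is covered trivially.

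\emph{Step 2: insert the envelope.} By \cref{prop:flow-basic} the solution stays positive, so Step 1 applies at every time. \Cref{cor:amplitude-envelopes} holds for every $\lambda\ge0$, since the second envelope in \eqref{eq:amplitude-envelope} uses only \cref{thm:amplitude} and $e^RR\ge e^R-1$. It gives
\[
 0\le R(\rho_t)\le\log\bigl[1+(e^{R_0}-1)e^{-(\lambda+c_G)t}\bigr].
\]
Both sides are nonnegative and $x\mapsto x^2/8$ is increasing on $[0,\infty)$. Squaring and applying Step 1 yields the first inequality of \cref{cor:amplitude-entropy}.

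\emph{Step 3: the second inequality.} Apply $\log(1+x)\le x$ with $x=(e^{R_0}-1)e^{-(\lambda+c_G)t}\ge0$, and square.

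The only step with real content is the uniform bound $\Var_{\mu_s}(h)\le R^2/4$ in Step 1. It must hold for the tilted measures $\mu_s$, not only for $\rho$. This is automatic because the bound depends only on the range of $h$, which does not depend on the measure. So I expect no genuine obstacle; the argument is a short corollary of \eqref{eq:tilt-identity} and \cref{cor:amplitude-envelopes}.
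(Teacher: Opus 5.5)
Your proposal is correct and follows the paper's proof essentially step for step. The paper also bounds $\Var_{\mu_s}(h)\le R^2/4$ by taking the second moment about the midpoint of the range and integrates this in \eqref{eq:tilt-identity} to get $H\le R^2/8$. It then inserts the envelope \eqref{eq:amplitude-envelope} and uses $\log(1+x)\le x$.
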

\begin{proof}
A random variable with range of length $R$ has variance at most $R^2/4$: evaluate its second moment about the midpoint of that range. Apply this to $h$ under every $\mu_s$ in \eqref{eq:tilt-identity}. It gives $H\le R^2/8$. Now use \eqref{eq:amplitude-envelope} and $\log(1+x)\le x$.
\end{proof}

The initial amplitude can depend on $N$. Thus neither \cref{thm:entropy-decay} nor \cref{cor:amplitude-entropy} implies a dimension-independent absolute mixing time for arbitrary initial probabilities. The sharpness in \cref{thm:variance} permits changing $N$, whereas the instantaneous sharpness in \cref{thm:amplitude} keeps the graph fixed. These are distinct optimization statements.

\section{Projective contraction between two solutions}\label{sec:hilbert}

\subsection{The best uniform exponent}
For positive probabilities, Hilbert's projective distance is
\[
 d_H(\rho,\eta)=\osc_i\log(\rho_i/\eta_i).
\]
It is a metric on the normalized positive simplex. The following specialization records the exact effect of the Fisher term on classical projective contraction \cite{GaubertQu2015}.

\begin{proposition}[Optimal uniform two-solution exponent]\label{prop:hilbert}
Let $w_*=\min_{i\ne j}w_{ij}$, including zero weights for missing edges. For any two positive solutions of \eqref{eq:flow} with the same parameter $\lambda\ge0$, put $D(t)=d_H(\rho_t,\eta_t)$. Then
\begin{equation}\label{eq:hilbert-dissipation}
 D^+D(t)\le-\lambda D(t)-4w_*\sinh(D(t)/2)
 \le-(\lambda+2w_*)D(t).
\end{equation}
Consequently $D(t)\le e^{-(\lambda+2w_*)t}D(0)$. The exponent $\lambda+2w_*$ cannot be increased in an estimate with prefactor one that holds for all positive initial probabilities and every $t\ge0$. In particular, for every noncomplete graph one has $w_*=0$, so the optimal uniform prefactor-one exponent is exactly $\lambda$; the extra graph-dependent contraction in \cref{prop:state-contraction} instead uses information about the initial states.
\end{proposition}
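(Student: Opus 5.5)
Set $x_k=\log\rho_k-\log\eta_k$, so that $D(t)=\max_kx_k-\min_kx_k$. The plan is to compute the logarithmic derivatives along \eqref{eq:flow} and to copy the Dini-derivative argument from the proof of \cref{thm:amplitude}. From \eqref{eq:flow},
\[
 \frac{\dd}{\dd t}\log\rho_k=\sum_\ell w_{k\ell}\Big(\frac{\rho_\ell}{\rho_k}-1\Big)-\lambda\big(h_k(\rho)-H(\rho)\big),
\]
and the same formula holds for $\eta$. In a difference $\dot x_i-\dot x_j$ the scalar terms $H(\rho)$, $H(\eta)$ and the constants $\log N$ cancel. The reaction therefore contributes exactly $-\lambda(x_i-x_j)$. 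Since $D$ is a finite maximum over ordered pairs, $D^+D$ is the maximum of $\dot x_i-\dot x_j$ over the active pairs. It suffices to bound this derivative for a fixed pair with $x_i$ maximal and $x_j$ minimal.

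For such a pair, write $u_k=e^{x_k}=\rho_k/\eta_k$. The heat part of $\dot x_i-\dot x_j$ is
\[
 -\sum_k w_{ik}\frac{\eta_k}{\eta_i}\Big(1-\frac{u_k}{u_i}\Big)-\sum_k w_{jk}\frac{\eta_k}{\eta_j}\Big(\frac{u_k}{u_j}-1\Big).
\]
Because $u_j\le u_k\le u_i$, every summand in both sums is nonnegative, so I would drop all terms with $k\notin\{i,j\}$. The two remaining terms are $w_{ij}$ times $t(1-e^{-D})+t^{-1}(e^D-1)$, where $t=\eta_j/\eta_i$. By AM--GM this is at least
\[
 2\sqrt{(1-e^{-D})(e^D-1)}=2\,(e^{D/2}-e^{-D/2})=4\sinh(D/2).
\]
Since $w_{ij}\ge w_*$, this gives the first inequality in \eqref{eq:hilbert-dissipation}. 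The second follows from $\sinh(y)\ge y$. The exponential bound then follows by the scalar Dini comparison already used in \cref{cor:amplitude-envelopes}.

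For optimality I would argue at time zero. A bound $D(t)\le e^{-\kappa t}D(0)$ with prefactor one, valid for all $t\ge0$, forces $D^+D(0)\le-\kappa D(0)$. It therefore suffices to build initial pairs with $D^+D(0)\ge-(\lambda+2w_*+\epsilon)D(0)$. Choose $i\ne j$ with $w_{ij}=w_*$ (a zero weight if the pair is nonadjacent). Take $\eta_i=\eta_j$ close to $1/2$ and $\eta_k=\delta$ tiny for every other $k$. Set $\rho_i\propto e^{D_0/2}\eta_i$, $\rho_j\propto e^{-D_0/2}\eta_j$, and take the remaining ratios $u_k$ strictly inside $(u_j,u_i)$, so that $(i,j)$ is the unique active pair.

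In the display above, the terms with $k\notin\{i,j\}$ then carry the factors $\eta_k/\eta_i=O(\delta)$. The edge term equals $w_*\big[(1-e^{-D_0})+(e^{D_0}-1)\big]=2w_*D_0+O(D_0^2)$. Hence $D^+D(0)=-(\lambda+2w_*)D_0+O(\delta)+O(D_0^2)$. Letting $\delta\to0$ first and then $D_0\to0$ gives $\kappa\le\lambda+2w_*$. For a noncomplete graph some pair is nonadjacent, so $w_*=0$ and the optimal exponent is exactly $\lambda$.

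I expect the main obstacle to be this sharpness construction. The off-pair terms in the heat part cannot be made to vanish by choosing the ratios $u_k$ alone, since each $u_k$ can cancel only one of the two sums. The point is instead to make those terms negligible through small reference masses $\eta_k$. One must then check that the normalizations keep both states in $\Pplus$ and that $(i,j)$ remains the unique active pair.
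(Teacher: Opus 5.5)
Your proposal is correct and follows essentially the same route as the paper. Both use the log-ratio evolution with cancelling scalar terms, drop every off-pair heat loss, apply AM--GM on the direct edge, and prove sharpness at time zero with tiny masses $\eta_k$ off the chosen pair, sending these to zero before $D\to0$. The one difference is in the sharpness example: the paper takes $\eta_j/\eta_i=e^{D/2}$, which makes the AM--GM step an equality and shows that the full rate $\lambda D+4w_*\sinh(D/2)$ is attained in the limit for each fixed $D$. Your symmetric choice $\eta_i=\eta_j$ only yields $\lambda D_0+2w_*\sinh D_0$, but that is still enough for the exponent claim.
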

\begin{proof}
Put $z_i=\log(\rho_i/\eta_i)$ and $D=\osc z$. Subtract the logarithmic equations for the two solutions to get
\begin{equation}\label{eq:ratio-evolution}
 \dot z_i=\sum_kw_{ik}\frac{\eta_k}{\eta_i}(e^{z_k-z_i}-1)
 -\lambda z_i+c(t),
\end{equation}
where $c(t)$ is independent of $i$. For an active maximum-ratio vertex $i$ and minimum-ratio vertex $j$, every heat term contributes a nonnegative amount to $-\dot z_i+\dot z_j$. Keeping only the direct edge gives the loss
\[
 w_{ij}\left[\frac{\eta_j}{\eta_i}(1-e^{-D})
 +\frac{\eta_i}{\eta_j}(e^D-1)\right]
 \ge4w_{ij}\sinh(D/2),
\]
by the arithmetic--geometric mean inequality. The common scalar in \eqref{eq:ratio-evolution} cancels. Taking the maximum derivative difference over active pairs proves the first inequality in \eqref{eq:hilbert-dissipation}. The second follows from $\sinh(D/2)\ge D/2$, and Gronwall's inequality gives the exponential estimate.

To prove optimality, choose a pair $(i,j)$ with $w_{ij}=w_*$. Fix $D>0$. For $N>2$, let $\varepsilon>0$ be small and put $m=1-(N-2)\varepsilon$. For $N=2$, put $m=1$ and omit the other coordinates. Define
\[
 \eta_i=\frac{m}{1+e^{D/2}},\quad
 \eta_j=\frac{me^{D/2}}{1+e^{D/2}},\quad
 \rho_i=\eta_j,\quad\rho_j=\eta_i,
\]
and $\rho_k=\eta_k=\varepsilon$ for $k\notin\{i,j\}$. Both vectors are positive probabilities. Their log ratios are $D/2$ at $i$, $-D/2$ at $j$, and zero elsewhere, so the active pair is unique. The direct-edge loss attains the arithmetic--geometric mean bound exactly. The remaining losses tend to zero as $\varepsilon\downarrow0$, for fixed $D$. Therefore
\[
 -\left.\frac{\dd}{\dd t}d_H(\rho_t,\eta_t)\right|_{t=0}
 \longrightarrow\lambda D+4w_*\sinh(D/2).
\]
An estimate with prefactor one and exponent $\beta$ valid for all positive initial probabilities would force this derivative loss to be at least $\beta D$. Letting first $\varepsilon\downarrow0$ and then $D\downarrow0$ gives $\beta\le\lambda+2w_*$.
\end{proof}

\subsection{Refinement using the amplitudes of the two trajectories}
\begin{proposition}[State-dependent additional contraction]\label{prop:state-contraction}
Let $\rho_t,\eta_t$ be two positive solutions, $D(t)=d_H(\rho_t,\eta_t)$, and
\[
 S(t)=\min\{R(\rho_t),R(\eta_t)\}.
\]
Then
\begin{equation}\label{eq:state-contraction}
 D^+D(t)\le-\lambda D(t)-e^{-S(t)}F_G(D(t)).
\end{equation}
For any nonnegative upper envelope $\overline S(t)\ge S(t)$,
\begin{equation}\label{eq:state-envelope}
 D(t)\le\log\left[1+(e^{D(0)}-1)
 \exp\left(-\lambda t-c_G\int_0^t e^{-\overline S(s)}\dd s\right)\right].
\end{equation}
\end{proposition}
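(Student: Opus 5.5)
The plan is to repeat the active-pair computation from the proof of \cref{thm:amplitude}, applied to the log-ratio $z_i=\log(\rho_i/\eta_i)$ rather than to $\log\rho$. The one new ingredient is that the ratios of $\eta$-masses multiplying the heat terms in \eqref{eq:ratio-evolution} are bounded below by $e^{-R(\eta_t)}$. As in the proof of \cref{prop:hilbert}, $D^+D(t)$ is the maximum of $\dot z_i-\dot z_j$ over active pairs. Here $i$ maximizes $z$ and $j$ minimizes $z$. The common scalar $c(t)$ cancels, and the reaction contributes exactly $-\lambda D$. It therefore suffices to show that, for every active pair, the heat loss is at least $e^{-S(t)}F_{ij}(D)$, which is at least $e^{-S(t)}F_G(D)$.

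Fix an active pair and put $E=e^D$ and $y_k=e^{z_k-z_j}\in[1,E]$. The heat loss from \eqref{eq:ratio-evolution} is
\[
 w_{ij}\Big[\tfrac{\eta_j}{\eta_i}(1-E^{-1})+\tfrac{\eta_i}{\eta_j}(E-1)\Big]
 +\sum_{k\notin\{i,j\}}\Big[w_{ik}\tfrac{\eta_k}{\eta_i}\big(1-\tfrac{y_k}{E}\big)+w_{jk}\tfrac{\eta_k}{\eta_j}(y_k-1)\Big].
\]
Every bracketed factor is nonnegative, and every ratio $\eta_k/\eta_\ell$ is at least $e^{-R(\eta_t)}$. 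Hence the loss is at least
\[
 e^{-R(\eta_t)}\Big[w_{ij}(E-E^{-1})+\sum_{k}\big\{w_{ik}(1-y_k/E)+w_{jk}(y_k-1)\big\}\Big].
\]
Each summand is affine in $y_k$ on $[1,E]$, so it is at least $\min\{w_{ik}(1-E^{-1}),w_{jk}(E-1)\}$. The whole expression is therefore at least $e^{-R(\eta_t)}F_{ij}(D)$.

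To replace $R(\eta_t)$ by $S(t)$, exchange the roles of the two solutions. Writing the ratio equation with $\tilde z=-z=\log(\eta/\rho)$, the weights become ratios of $\rho$-masses, and the active pair becomes $(j,i)$ with the same oscillation $D$. The same argument bounds the same loss below by $e^{-R(\rho_t)}F_{ji}(D)\ge e^{-R(\rho_t)}F_G(D)$. This is where it matters that $F_G$ is a minimum over ordered pairs. Taking the better of the two bounds gives \eqref{eq:state-contraction}.

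For the envelope \eqref{eq:state-envelope}, set $U=e^D-1$. Then use $F_G(D)\ge c_G(1-e^{-D})$ from \cref{thm:amplitude}, $e^DD\ge e^D-1$, and $e^{-S}\ge e^{-\overline S}$. These give
\[
 D^+U\le-\lambda e^DD-c_Ge^{-\overline S}(e^D-1)\le-\big(\lambda+c_Ge^{-\overline S(t)}\big)U.
\]
A Gronwall lemma for upper Dini derivatives then yields $U(t)\le U(0)\exp(-\lambda t-c_G\int_0^te^{-\overline S})$, which is \eqref{eq:state-envelope}. Measurability of $\overline S$ is needed for the integral; if necessary we replace $\overline S$ by a measurable majorant, or note that $S$ is continuous. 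The main obstacle I expect is the exchange-of-roles step: one must check that the loss is literally the same quantity under both parametrizations, which holds because $D$ and the active pairs are invariant under $z\mapsto-z$. The Dini-derivative bookkeeping at tied extrema is handled exactly as in \cref{thm:amplitude}.
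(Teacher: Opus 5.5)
Your proposal is correct and follows the paper's proof essentially step for step. You factor $e^{-R(\eta_t)}$ out of the nonnegative heat losses in \eqref{eq:ratio-evolution} to reach the affine form \eqref{eq:affine-loss}, exchange the two solutions (via $z\mapsto-z$, which yields $F_{ji}$) to obtain $e^{-R(\rho_t)}$, and then integrate $D^+U\le-(\lambda+c_Ge^{-\overline S})U$ for $U=e^D-1$; your extra remarks on the invariance of the loss and on the Gronwall step only make explicit what the paper leaves implicit.
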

\begin{proof}
First set $S_\eta=R(\eta_t)$, so $\eta_k/\eta_i\ge e^{-S_\eta}$ for every $i,k$. At an active pair in \eqref{eq:ratio-evolution}, factor this lower bound out of each nonnegative heat loss. The remaining expression is precisely of the affine form \eqref{eq:affine-loss}, with $e^D$ in place of $e^r$. The proof of \cref{thm:amplitude} gives
$D^+D\le-\lambda D-e^{-S_\eta}F_G(D)$. Interchanging the two solutions gives the same estimate with $R(\rho_t)$, proving \eqref{eq:state-contraction}.

Apply $F_G(D)\ge c_G(1-e^{-D})$ and set $U=e^D-1$. Since $e^DD\ge e^D-1$,
\[
 D^+U\le-[\lambda+c_Ge^{-\overline S(t)}]U.
\]
Integration yields \eqref{eq:state-envelope}.
\end{proof}

For $\lambda>0$, \cref{cor:amplitude-envelopes} supplies an integrable envelope $\overline S(t)$ tending to zero. Hence
$\int_0^t e^{-\overline S(s)}\dd s=t-O(1)$, with the bounded correction depending on the initial amplitudes. When $c_G>0$, \eqref{eq:state-envelope} therefore gives a long-time exponent $\lambda+c_G$ with an initial-state-dependent prefactor. This does not contradict the optimal prefactor-one exponent in \cref{prop:hilbert}.

\section{Examples, distinctions, and further questions}\label{sec:examples}

\subsection{Finite curvature need not be nonnegative}
The two-state model already separates boundedness from positivity. Let its edge weight be $w>0$ and write $\rho=(p,1-p)$. Put
\[
 q(p)=w\theta(p,1-p)+\lambda p(1-p).
\]
The metric is $\dd p^2/q(p)$. For $H(p)=p\log(2p)+(1-p)\log(2(1-p))$, the Hessian-to-metric ratio is
\[
 q(p)H''(p)+\tfrac12q'(p)H'(p).
\]
Indeed the one-dimensional Christoffel symbol is $-q'/(2q)$, and division of the covariant Hessian by $1/q$ gives the displayed expression. Define the pointwise two-state curvature quotient by $k_{\lambda,w}^{(2)}(p)=q(p)H''(p)+\tfrac12q'(p)H'(p)$. With $x=|\log((1-p)/p)|$, elementary substitution yields
\begin{equation}\label{eq:two-state}
 k_{\lambda,w}^{(2)}(p)
 =w\left(1+\frac{\sinh x}{x}\right)
 +\lambda\left(1-\frac{x\tanh(x/2)}2\right),
\end{equation}
with the continuous value at $x=0$. For any fixed $x$ satisfying $x\tanh(x/2)>2$, the coefficient of $\lambda$ is negative. Thus \eqref{eq:two-state} is negative for sufficiently large $\lambda$, although \cref{thm:classification} guarantees a finite global lower bound for every fixed $\lambda>0$ on this graph.

Conversely, \cref{cor:singular,thm:entropy-decay} show that graphs of diameter at least three can have entropic curvature $-\infty$ at every positive Fisher parameter while their entropy flow still converges exponentially. This is an explicit separation of a global geometric lower bound from a dissipation property of the selected flow.

\subsection{What the classification does and does not determine}
The diagram of implications proved in this paper is
\[
 \begin{gathered}
 \text{two-step reachability of every boundary support}\\[-2pt]
 \Updownarrow\\[-2pt]
 \diam(G)\le2
 \quad\Longleftrightarrow\quad
 \kappa_\lambda(G)>-\infty\ (\lambda>0)
 \quad\Longleftrightarrow\quad c_G>0.
 \end{gathered}
\]
These equivalences concern finite curvature and positive instantaneous amplitude damping. They do not classify the sign of the best curvature constant for each parameter. Likewise, the Fisher parameter improves the uniform damping in \eqref{eq:flow} but can destroy a global Hessian lower bound at the boundary.

Several finer questions remain separate. On a fixed diameter-two graph, one may seek the best finite lower bound, the exact initial interval of nonnegative curvature, and all states and potential directions attaining the first zero. For the four-cycle, the question of the exact reaction strength at which the global Hessian first loses nonnegativity is more delicate than determining whether the lower bound is finite. For the dynamical estimates, pointwise optimality of $F_G$ and dimension-uniform optimality of $a(r)$ do not establish optimality of the entropy prefactor along complete trajectories of a fixed graph. Finally, extension of the interior Hessian results to the metric completion, and analogous classification for a nonuniform reversible reference probability, require their own arguments.

\section*{Acknowledgment of AI assistance}
ChatGPT (OpenAI) was used to assist with literature searches, the development and checking of mathematical derivations, and manuscript drafting and editing. The authors are responsible for the accuracy and integrity of the mathematical statements, proofs, references, and final text.

\end{document}